\newtheorem{theorem}{Theorem}%[section] (If  you want theorem numbered
\newtheorem{lemma}[theorem]{Lemma}%   with   section   number.    Same
\newtheorem{corollary}{Corollary}%    goes     for    lemmas,    etc.)
\newtheorem{proposition}[theorem]{Proposition}
\theoremstyle{definition}                 
 \newtheorem{defn}{Definition}
\newtheorem*{notation}{Notation}
\theoremstyle{remark} \newtheorem*{remark}{Remark}
\newcommand{\field}[1]{\mathbb{#1}}          \newcommand{\Q}{\field{Q}}
\newcommand{\R}{\field{R}}                   \newcommand{\Z}{\field{Z}}
\newcommand{\C}{\field{C}}
\renewcommand{\a}{\alpha}
\renewcommand{\b}{\beta}                      
                    \newcommand{\e}{\varepsilon}
\newcommand{\g}{\gamma}                         \newcommand{\G}{\Gamma}
\renewcommand{\l}{\lambda}
\renewcommand{\th}{\theta}
 \newcommand{\ra}{\rightarrow}
\begin{document}

\title[Finiteness of Monodromy]
{Sums of Fractions and Finiteness of Monodromy}

\author{E. Ghate and T. N. Venkataramana}
\address{School of Mathematics, Tata Institute of Fundamental Research, 
Homi Bhabha Road, Mumbai 400005, India}
\email{eghate@math.tifr.res.in, venky@math.tifr.res.in}

\date{}

\begin{abstract}
 We solve an elementary number theory problem on sums of fractional parts, 
 using methods from group theory.
 We apply our result to deduce the finiteness of certain 
 monodromy representations. 
\end{abstract}

\maketitle

\section{Introduction}

In this  paper, we are  concerned with an elementary  number theoretic
question on a  sum of certain fractional parts.  The simplest instance
of this  is when there are  only three fractional  parts involved, and
the  classification  of such  $3$-tuples  is  equivalent to  Schwarz's
classification of  algebraic Euler-Gauss hypergeometric  functions. We
give a different proof  of the Schwarz classification using elementary
considerations, as well as use the Schwarz classification to show that
the  number theoretic  condition  does  not hold  when  the number  of
fractional parts  is more than  six, and show that it holds only  sporadically when
the  number  of  fractional  parts   is  four  or  five  (see  Theorem
\ref{numbertheoretic}). \\

It  turns out that  the answer  to the  aforementioned question  is closely
connected  to  the  finiteness  of  certain monodromy  groups.   As  a
consequence  of our  main result  on  fractional parts,  we classify when 
the  image of  certain specializations  of  the so called Gassner
representation is finite.  By linking these  specializations with  the monodromy
representations associated to certain  families of cyclic coverings of
the projective line of the  type considered by Deligne and Mostow (see
\cite{Del-Mos}), we recover results of Cohen and Wolfart \cite{Coh-Wol} 
on finiteness of
monodromy groups. Another  corollary of  the  main  result on  fractional  parts is  the
algebraicity of  certain Lauricella $F_D$-type  functions, also proved
in \cite{Coh-Wol}  (see also \cite{Ssk} and  \cite{Bod}) by completely
different methods.\\

We now go into some detail. First, we introduce some notation. 

\begin{notation} Let $d\geq  2$ and $n\geq 2$ be  integers.  Fix $n+1$
integers  $1\leq k_i  \leq  d-1$ such  that  the g.c.d.   of $d,  k_1,
\cdots,k_{n+1}$  is  $1$.   Given  $s$  in  the  multiplicative  group
$(\Z/d\Z)^*$ of  units of $\Z/d\Z$,  consider the numbers  $\mu _i(s)$
(denoted $\mu _i=\{\frac{k_i}{d} \}$ when $s=1$) defined by
\[\mu_i(s)= \left\{\frac{k_is}{d}\right\},\] where  $0 \leq \{x\} < 1$
denotes  the fractional  part  of a  real  number $x$.   We may  write
$k_is=q_id+l_i$, where $1\leq  l_i \leq d-1$ and $q_i$  is an integer;
thus     the    remainder    $l_i$     has    the     property    that
$\{\frac{k_is}{d}\}=\frac{l_i}{d}$. If we denote by $[x]$ the integral
part   of  $x$,  then   $x=[x]+\{x\}$.   The   number  $\mu   _i  (s)=
\{\frac{k_is}{d}\}$ depends  only on the  fraction $\frac{k_i}{d} =\mu
_i$.
\end{notation}

\begin{defn} 
We say that the rational  numbers $\mu _1, \cdots, \mu _{n+1}$ satisfy
the condition (\ref{SS}) if,
\begin{equation} \label{SS}
\begin{gathered}
\forall  { s} \in (\Z/d\Z)^*, \\
\text{either} \quad \sum _{i=1}^{n+1}  \left\{\frac  {k_i s}{d}  \right\}<1
\quad \text{or} \quad \sum _{i=1}^{n+1} \left\{-\frac{k_is}{d} \right\} < 1.
\end{gathered}
\end{equation}

In terms  of the  remainders $l_i$ above, this  means that
either $\sum l_i <d$ or else $\sum (d-l_i) <d$.
\end{defn}

The  following theorem says  that condition (\ref{SS})  on $n,d,
k_i$ is  very stringent and holds  in a very limited  number of cases.
Let us say that the tuple $(\frac{k_1}{d}, \cdots, \frac{k_{n+1}}{d})$
is   {\it   equivalent}   to   the  tuple   $(\frac{l_1}{d},   \cdots,
\frac{l_{n+1}}{d})$, if  there exists $t  \in (\Z/d\Z) ^*$  such that,
for all $i$, we  have $\{\frac{l_i}{d}\}= \{\frac{k_it}{d}\}$, up to a
permutation of the indices. The validity of condition (1) depends only 
on  the  equivalence  class  of  the  tuple  $(\frac{k_1}{d},  \cdots,
\frac{k_{n+1}}{d})$.

\begin{theorem} \label{numbertheoretic}  Suppose $n,d,k_i$ are  as in
the preceding so  that condition (\ref{SS}) holds.  Then 
\[n\leq 4.\] 

Moreover, up to equivalence, the numbers 
$\mu_1, \cdots, \mu_{n+1}$ satisfy the conditions given below. \\

\begin{enumerate}
\item[(i)] If $n=4$, then $\mu _i=\frac{1}{6}$, for all $i\leq n+1=5$. \\

\item[(ii)] If $n=3$, then there are only two cases: 
$\mu _i =\frac{1}{6}$, for all $i\leq n+1=4$, or 
$\mu _1=\mu _2=\mu _3= \frac{1}{6}$ and $\mu _4= \frac{2}{6}$. \\

\item[(iii)]  If $n=2$,  then either we may write $\mu _i=\frac{k_i}{d}$ with 
$d=2m$, for $m \geq 1$, and $k_1 = k_2 = p$, $k_3=m-p$, with $1 \leq p \leq m-1$ coprime to $m$, 
or else, the $\mu _i=\frac{k_i}{d}$ lie in a finite list with $d\leq 60$.
\end{enumerate}
\end{theorem}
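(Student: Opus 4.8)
The key reduction is to the case $n = 2$, i.e., to three fractional parts $\mu_1, \mu_2, \mu_3$, and here the claim is precisely Schwarz's list of algebraic hypergeometric functions, so the strategy is (a) give a self-contained elementary proof of the $n=2$ classification, and (b) show that any valid tuple with $n \geq 3$ can be "collapsed" to a valid tuple with fewer fractional parts, forcing $n \leq 4$ and pinning down the exceptional tuples.

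First I would analyze condition (\ref{SS}) more carefully. For a fixed tuple, set $f(s) = \sum_{i=1}^{n+1}\{k_i s/d\}$; since $\{x\} + \{-x\} = 1$ whenever $x \notin \Z$ (and the g.c.d. condition guarantees that for each $s$ at least one $k_i s \not\equiv 0$), we have $f(s) + f(-s) = n+1 - (\#\{i : d \mid k_i s\})$, which is between $n$ and $n+1$. So condition (\ref{SS}) says: for every unit $s$, $\min(f(s), f(-s)) < 1$, equivalently $\max(f(s), f(-s)) > n - 1$ (roughly). In particular, averaging, $\frac{1}{\varphi(d)}\sum_s f(s) = \frac{1}{2}\sum_i \frac{\varphi(d)\cdot(\text{something})}{\dots}$; more usefully, $\sum_{s}\{k_i s/d\}$ depends only on $\gcd(k_i,d)$, and if $\gcd(k_i,d) = e_i$ then the average of $\{k_i s/d\}$ over units $s$ is $\frac{1}{2}$ when $e_i < d$. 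Thus the average of $\min(f(s),f(-s))$ is at most $1$ only if the pair $(f(s),f(-s))$ is extremely unbalanced for almost all $s$, which already forces $n$ small and the $\mu_i$ to have small denominators. I would make this precise to get an a priori bound on $n$ and on $d$ in the "sporadic" range.

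Next, the collapsing step: suppose $(\mu_1,\dots,\mu_{n+1})$ satisfies (\ref{SS}) with $n \geq 3$. I would show that after reordering, the sub-tuple obtained by replacing $\mu_n, \mu_{n+1}$ by the single fraction representing $\mu_n + \mu_{n+1}$ (i.e. $\{(k_n + k_{n+1})s/d\}$), or by $\mu_n + \mu_{n+1} - 1$ if that sum exceeds $1$, still satisfies (\ref{SS}) with $n$ replaced by $n-1$ — using the elementary inequality $\{x\} + \{y\} \geq \{x+y\}$ together with the structure of condition (\ref{SS}) (one has to check the two alternatives in (\ref{SS}) are respected, which is where the hypothesis that \emph{one} of the two sums is $< 1$ enters critically). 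Iterating, every valid tuple descends to a valid $3$-tuple, i.e. to Schwarz's list. Then I would run the argument in reverse: lifting a Schwarz $3$-tuple by "splitting" one $\mu_i$ into two, and checking which splits preserve (\ref{SS}). Schwarz's list is short, the denominators $d$ occurring are bounded ($d \le 60$ or so, the relevant cases being $d \in \{2,3,4,5,6,12,\dots\}$ tied to the dihedral/tetrahedral/octahedral/icosahedral cases), and a finite check shows that only the $1/6$-type tuples admit a further split — giving (i) and (ii) — while the infinite dihedral family $d = 2m$, $(p,p,m-p)$ is exactly the $n=2$ family that does not split. This yields the bound $n \le 4$ and the exact lists.

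**Main obstacle.** The genuinely hard part is the $n=2$ base case: proving that the $3$-tuples satisfying (\ref{SS}) are exactly those in Theorem~\ref{numbertheoretic}(iii), i.e. reproving Schwarz's classification by elementary means. The paper's promised contribution is a group-theoretic proof of this, so I expect the real work is to interpret (\ref{SS}) for $n=2$ as a statement about when a certain rank-$2$ (Gassner/hypergeometric) monodromy representation of the free group or braid group has finite image — concretely, that the local exponents $\mu_i(s)$ never "wind around" too far — and then classify finite subgroups of $\mathrm{GL}_2(\C)$ (cyclic, dihedral, and the binary polyhedral groups $A_4, S_4, A_5$) to read off the arithmetic list, with $d \le 60$ emerging from the icosahedral case. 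The subsidiary obstacle is making the collapsing/splitting step airtight: one must verify that the alternative structure of (\ref{SS}) — "$\sum l_i < d$ \emph{or} $\sum(d - l_i) < d$" — is stable under merging fractions, and handle the degenerate cases where some $k_i s \equiv 0 \pmod d$ (so $\{k_i s/d\} = 0$), which is exactly where the g.c.d. hypothesis on $d, k_1, \dots, k_{n+1}$ is used.
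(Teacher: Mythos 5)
Your plan for the base case $n=2$ is essentially the paper's: condition (\ref{S}) is translated into the finiteness of an explicit rank-two monodromy-type group (via the total anisotropy of a skew-Hermitian form), and the finite irreducible subgroups of $GL_2(\C)$ are then used to read off the dihedral family and the finite Schwarz list. (The paper avoids the full polyhedral classification and only proves that non-central elements of $\G/Z$ have order at most $5$, which suffices; that is a cosmetic difference.) For $n\geq 3$, however, you take a genuinely different reduction: you \emph{merge} two fractions using $\{x+y\}\leq\{x\}+\{y\}$, whereas the paper simply \emph{discards} entries (Lemma~\ref{bootstrap}): if the full sum is $<1$ for $s$ or for $-s$, then a fortiori so is any partial sum, so every sub-tuple of size $\geq 3$ already satisfies (\ref{S}). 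Your merging step is also valid (the subadditivity inequality goes the right way, and the degenerate case $k_i+k_j\equiv 0 \bmod d$ cannot occur under (\ref{SS}) since it forces $\{k_is/d\}+\{k_js/d\}=1$ for every unit $s$), but discarding is both simpler and more useful in the reverse direction, because it constrains the $k_i$ themselves rather than their pairwise sums.

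The genuine gap is in your lifting step. You assert that ``Schwarz's list is short, the denominators $d$ occurring are bounded,'' so that checking which triples split is a finite check. This ignores the dihedral family $d=2m$, $(p,p,m-p)$, whose denominators are unbounded; a valid $4$-tuple could a priori have all of its collapsed (or, in the paper's version, discarded) triples land in that family, and then nothing bounds $d$. This is exactly where the real work lies: the paper first shows (in the discarding setup) that at most one of the sub-triples of a valid $4$-tuple can be dihedral --- if two were, one gets $k_2/d+k_3/d=k_1/d+k_4/d=1/2$ and (\ref{SS}) fails --- and then uses primitivity of the tuple together with the coprimality of the two scaling factors $a,b$ in (\ref{smallerlevel}) to force $d$ to divide the l.c.m.\ $120$ of the non-dihedral denominators, reducing to the finite search of Tables~3 and~4. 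Your proposal needs an analogue of both of these arguments and currently has neither. Separately, the averaging computation in your first paragraph does not deliver what you claim: since $f(s)+f(-s)=n+1$ for every unit $s$, the mean of $f$ over units is always $(n+1)/2$ regardless of whether (\ref{SS}) holds, so no a priori bound on $n$ or on $d$ comes out of it; in the paper the bound $n\leq 4$ is obtained only at the end, as a consequence of the explicit classification for $n=3$ and the failure of $(1,1,1,1,1,1)$ for $d=6$ at $n=5$.
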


\begin{remark} Note  that condition (\ref{SS}) is  a purely number
theoretic condition; the proof of the theorem, however, will depend on
an analysis of certain finite subgroups of unitary groups generated by
reflections.
\end{remark}

The  proof of  Theorem \ref{numbertheoretic}  proceeds as  follows. In
Section \ref{Sectionn=2},  we prove the theorem  for $n=2$. %Though the  result 
%is purely number  theoretic, 
We link condition (\ref{SS}) in the case $n = 2$ 
to the finiteness of a certain subgroup of the  unitary group of an
explicit skew-Hermitian form  (implicitly, this is the monodromy group
of the Gauss hypergeometric function, but we do not use this). In Section~\ref{ngeq3}, 
we use  a bootstrapping argument to show that condition (\ref{SS})
holds in very few cases for $n=3$ and $4$. Using this it is finally 
shown that condition (\ref{SS}) cannot hold for $n \geq 5$. \\

In  Section~\ref{skewhermitian},  we  show  that a  slightly  modified
version  of condition  (\ref{SS}), namely  condition (\ref{*})  in the
text,  is   equivalent  to  the   total  anisotropy  of   an  explicit
skew-Hermitian  form  in  $n$  variables  over  the  cyclotomic  field
$E=\Q(e^{2\pi i  \frac{1}{d}})$.  Conditions (\ref{SS})  and (\ref{*})
coincide for $n = 2$, and we show that they are equivalent for general
$n$.  We deduce that the image of the Gassner representation at $d$-th
roots of  unity is finite if  and only if  condition (\ref{SS}) holds,
providing  us  with  the algebraicity results  on  monodromy  groups mentioned
above (see Theorem~\ref{numbertheoreticmonodromy} in  Section~\ref{monodromy} below). As is (more or  less) known, the
finiteness of the image of the Gassner representation is equivalent to
the algebraicity  of the associated Lauricella  $F_D$-functions and we
list     some     of    these     results     as    corollaries     in
Section~\ref{lauricella}. \\

{\bf Acknowledgments}  We thank Paula Tretkoff and  Jurgen Wolfart for
making  their paper  (\cite{Coh-Wol}) available  to us,  and  also for
pointing out very helpful  references (especially \cite{Bod}). We also
thank F. Beukers and  E. Looijenga for interesting conversations about
the material of  the paper.  That the existence  of a totally definite
Hermitian  form is equivalent  to finiteness  of monodromy  is already
implicit in \cite{CHL}  and we thank Looijenga for  mentioning this to
one of us. \\

\noindent T. N. Venkataramana  gratefully acknowledges the  support of the
J. C. Bose fellowship for the year 2013-2018. 

\section{The case $n=2$.} \label{Sectionn=2}

\subsection{Definition}

Let    $d, k_1, k_2, k_3$     be    positive    integers     such    that
$d\Z+k_1\Z+k_2\Z+k_3\Z=\Z$. We say that these integers  
satisfy condition (\ref{S}) if,

\begin{equation} 
  \label{S}
  \begin{gathered}
  \text{for all} \> s \in (\Z/d\Z)^*,  \\
  \text{either}  \quad   \Sigma_s \stackrel{def}{=} \sum _{j=1}^3\left\{\frac{k_js}{d} \right\} <1 
  \quad \text{or} \quad \Sigma_{-s} =  \sum _{j=1}^3\left\{-\frac{k_js}{d}\right\}<1.
  \end{gathered}
\end{equation}

\begin{remark}  

[0] Condition (\ref{S}) is just condition (\ref{SS}) for $n = 2$.  \\

[1] Condition  (\ref{S})  depends  only  on  the
fractional parts $\nu _j = \{\frac{k_j}{d}\}$ of $\frac{k_j}{d}$, for $j=1,2,3$, 
and not directly on the  numbers $(d,k_1,k_2,k_3)$; for example,  condition (\ref{S})
holds   for  $(d, k_1,k_2,k_3)$   if   and  only   if   it  holds   for
$(d, k_1+d,k_2,k_3)$, etc. \\

[2] We  may also permute  the integers $k_1,k_2,k_3$  without changing
 condition (\ref{S}). \\

[3] If  $(d, k_1,k_2,k_3)$ is replaced by  $(d, k_1t,k_2t,k_3t)$ for some
integer $t$ coprime to $d$, then condition (\ref{S}) is unaltered.\\

[4]  Since $\{-x  \}=1-\{x\}$, for a real number $x$, condition (\ref{S})  
%for  $\Sigma _s$ 
is equivalent to saying  that 
%if $\Sigma$ is the sum $\sum _{j=1}^3 \{ \frac{k_js}{d}\}$, then  
either $0<\Sigma_s <1$ or $2 < \Sigma_s <3$, for each sum $\Sigma_s$.  
That is, the  integral part of each sum  $\Sigma_s$ is either $0$
or $2$ (but not $1$).

\end{remark}

\subsection{Main result for triples} We say that a  triple of  rational numbers 
$(\nu _1, \nu _2,  \nu _3)$ as above is  equivalent to another such triple 
$(\nu  _1',\nu _2',\nu _3  ')$ (for the same denominator $d)$, if there exists  
$t\in (\Z/d\Z)^*$ such  that, after a permutation  of the indices,    
we   have   $\nu    _j=\{\frac{k_j}{d}\}$   and    $   \nu
_j'=\{\frac{k_jt}{d}\}$, for $j=1,2,3$. By the remarks in the preceding
subsection, if condition (\ref{S}) holds for one triple, then it holds for 
all equivalent triples. \\

For $d$ and $k_1, k_2, k_3$ as above, write 
\begin{eqnarray*}
  \lambda & = & 1-\left\{\frac{k_1}{d}\right\}-\left\{\frac{k_2}{d}\right\}, \\
     \mu  & = & 1-\left\{\frac{k_1}{d}\right\}-\left\{\frac{k_3}{d}\right\}, \\                           
     \nu  & = & 1-\left\{\frac{k_2}{d}\right\}-\left\{\frac{k_3}{d}\right\}. 
\end{eqnarray*}   
If   $(\frac{k_1}{d}, \frac{k_2}{d}, \frac{k_3}{d})$  satisfy condition (\ref{S}), we may assume that 
$0 < \lambda, \mu, \nu <1$.

\begin{theorem} \label{n=2} (The case $n = 2$) If  $(d, k_1,k_2,k_3)$ satisfy condition
(\ref{S}), then up to the foregoing equivalence, we have either

\[ \quad \frac{k_1}{d}=\frac{k_2}{d}=\frac{p}{2m}, 
\quad {\rm and} \quad  \frac{k_3}{d}=\frac{m-p}{2m},\]
for some $m \geq 1$ and some $1 \leq p < m$ coprime to $m$, so that 
$$\lambda = \frac{m-p}{m}, \> \mu = \nu = \frac{1}{2}$$ 
(we refer to this as the ``dihedral case''), or
else
\[ \quad (\lambda, \mu ,\nu) \in \text{the finite list in Table 1 below}. \] 

\end{theorem}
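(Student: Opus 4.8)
The plan is to recognize condition (\ref{S}) as exactly the statement that the triple $(\lambda,\mu,\nu)$ defines a \emph{finite} monodromy group for the associated Gauss hypergeometric function, and then invoke (or re-derive, via the elementary route promised in the introduction) Schwarz's classification. First I would make the reduction explicit: writing $\nu_j=\{k_j/d\}$, remark [4] says condition (\ref{S}) is equivalent to asking that each sum $\Sigma_s=\sum_j\{k_js/d\}$ has integral part $0$ or $2$, never $1$. Translating through $\lambda,\mu,\nu$, the three quantities $1-\nu_1-\nu_2$, $1-\nu_1-\nu_3$, $1-\nu_2-\nu_3$ lying in $(0,1)$ at $s=1$, together with the Galois-conjugate conditions at every $s\in(\Z/d\Z)^*$, is precisely the condition that the local exponent differences $(\lambda,\mu,\nu)$ of a hypergeometric equation, and all their conjugates under $\mathrm{Gal}(\Q(\zeta_d)/\Q)$, satisfy the Schwarz inequality characterizing finite projective monodromy. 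So the first key step is a clean dictionary lemma: $(d,k_1,k_2,k_3)$ satisfies (\ref{S}) $\iff$ the hypergeometric monodromy group attached to $(\lambda,\mu,\nu)$ is finite (equivalently, $(\lambda,\mu,\nu)$ lies on Schwarz's list, possibly including the infinite dihedral family).

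Second, I would separate the dihedral case. The dihedral entries of Schwarz's list are exactly those with one of $\lambda,\mu,\nu$ equal to $\tfrac12$ (twice, in fact, up to permutation): so $\mu=\nu=\tfrac12$ forces $\{k_1/d\}=\{k_2/d\}$ and then $\{k_3/d\}=\tfrac12-\{k_1/d\}+\text{(integer)}$; clearing denominators and writing $\{k_1/d\}=p/(2m)$ in lowest-terms-adjusted form gives $k_1\equiv k_2\equiv p$, $k_3\equiv m-p\pmod{2m}$ with $\gcd(p,m)=1$, and conversely one checks directly that this family satisfies (\ref{S}) for all $s$ — this last verification is a short computation with fractional parts, not an obstacle. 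This yields $\lambda=(m-p)/m$, $\mu=\nu=\tfrac12$ as claimed.

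Third, for the non-dihedral case, the classical Schwarz list is finite, so there are only finitely many rational triples $(\lambda,\mu,\nu)$; for each such triple one reads off the possible $(\nu_1,\nu_2,\nu_3)$ and hence the possible $(d,k_1,k_2,k_3)$ up to equivalence. The point here is that condition (\ref{S}) demands finiteness not just for one hypergeometric equation but for the whole Galois orbit, and for a rational triple $(\lambda,\mu,\nu)$ the Galois orbit is automatically ``hypergeometric'' in the Deligne–Mostow sense, so finiteness of the full orbit is no stronger than finiteness of the single group — this is why the non-dihedral solutions form exactly a finite list, bounded by the denominators appearing in Schwarz's table (whence $d\le 60$, the lcm of the relevant Schwarz denominators). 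Assembling the surviving tuples into Table 1 completes the proof.

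\textbf{Main obstacle.} The delicate point is the dictionary lemma — showing that condition (\ref{S}), phrased purely in terms of fractional parts at \emph{every} unit $s$, is equivalent to the finiteness of hypergeometric monodromy, rather than merely implied by it. One direction (finite monodromy $\Rightarrow$ (\ref{S})) follows from the integrality of Schwarz's numbers; the reverse requires ruling out tuples that satisfy the fractional-part inequalities ``by accident'' at all $s$ without lying on Schwarz's list. I expect this to be handled exactly as the introduction advertises: by realizing $(\lambda,\mu,\nu)$ (and its conjugates) inside a unitary reflection group over $\Q(\zeta_d)$, so that (\ref{S}) becomes the statement that an explicit skew-Hermitian form is totally definite, which forces the reflection group to be finite — the very finite unitary reflection groups whose classification reproduces Schwarz's list. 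Getting this equivalence airtight, and correctly tracking the ``up to equivalence'' bookkeeping so that no genuine solution is lost, is the crux; everything downstream is finite casework.
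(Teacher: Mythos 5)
Your proposal is correct in outline and its crux coincides with the paper's: both reduce condition (\ref{S}) to the total anisotropy of an explicit $2\times 2$ skew-Hermitian form over $\Q(e^{2\pi i/d})$ (detected by the sign $-(-1)^{[\Sigma_s]}$ of the determinant at each Galois conjugate), and both convert total anisotropy into finiteness of an explicit irreducible subgroup of $GL_2({\mathcal O}_E)$ by averaging a definite Hermitian form plus Schur's lemma in one direction, and discreteness inside a compact unitary group in the other (this is exactly Lemmas \ref{Sandhermitian}, \ref{Sandfinite} and \ref{definite}); your dihedral analysis (two of $\lambda,\mu,\nu$ equal to $\tfrac12$, followed by a direct verification of (\ref{S}) for that family) also matches the paper's. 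Where you genuinely diverge is the endgame: you propose to quote Schwarz's classification (equivalently, the classification of finite subgroups of $PGL_2(\C)$ as cyclic, dihedral, or polyhedral), whereas the paper deliberately avoids this and instead proves, by a self-contained orbit-counting argument adapted from Lehrer--Taylor, only the weaker statement that a finite non-dihedral irreducible subgroup of $GL_2(\C)$ has projective element orders at most $5$; applied to the explicit generators $A$, $B$ and to $AB$, whose projective orders are the orders of the roots of unity $e^{2\pi i(k_i+k_j)/d}$, this bounds the denominators of $\lambda,\mu,\nu$ by $5$ and reduces everything to a direct finite check of (\ref{S}), producing Table 1. Your route buys brevity at the cost of circularity relative to the paper's stated aim (the paper presents Theorem \ref{n=2} as a \emph{new} derivation of Schwarz's list), and it also requires an extra identification you elide: that the group generated by $A$ and $B$ really is the hypergeometric monodromy group with exponent differences $(\lambda,\mu,\nu)$, a step the paper sidesteps entirely by never leaving the explicit matrices. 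Neither difference is a gap; as a standalone proof of the stated theorem your plan works once that identification (or the elementary $PGL_2$ classification) is supplied.
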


\begin{remark} Again, though  the statement of the theorem is  purely (elementary)  number
theoretic, the proof uses the finiteness of a certain group
$\Gamma  $ in $GL_2(\C)$.  It would  be interesting  to find  a purely
number theoretic proof of the above theorem.
\end{remark}

\subsection{Relation of Condition (\ref{S}) with a skew-Hermitian form}

\begin{notation}

Let $E/F$ be a totally imaginary quadratic extension of a totally real
number field. Then $E=F[t]/(t^2+\a)$ for some totally positive element
$a$ in the  real subfield  $F$. $E/F$ is called a CM extension. Denote by  $z\mapsto {\overline  z} \quad
(\forall z\in E)$ the action  of the non-trivial element of the Galois
group  of $E/F$, induced by complex conjugation (under any embedding of $E$ into 
$\C$).   Let $h:E^n\times  E^n\ra E$,  denoted $(x,y)\mapsto
h(x,y)$  be an  $F$-bilinear  form which  is $E$-linear  in  the first
variable $x$  and such that  for all $x,y\in  E^n$, $h(y,x)=-\overline
{h(x,y)}$. Then $h$ is called a skew-Hermitian form on $E^n$. \\

If we  replace $F$ by $\R$ and  $E$ by $\C$, a skew-Hermitian form can
still be defined and it is of the form $h(x,y)=iH(x,y)$ where $H$ is a
Hermitian form on $\C^n$. \\

We say that a skew-Hermitian form $h$ on $E^n$ is {\bf anisotropic}, 
if $h(x,x)=0$, for $x \in E^n$, implies that $x=0$. Over $\C/\R$, a skew-Hermitian 
form $h$ is anisotropic if   and   only    if   $h=\pm   i   H$,   where    $H$   
is Hermitian and positive definite.  Furthermore, a diagonal  skew-Hermitian form over $\C/\R$ is
anisotropic if and only if the diagonal entries are $\l _1, \cdots, \l
_n$, with $\l _j\in i \R  \setminus \{0\}$ being on the imaginary axis,
and such that  the successive ratios $\l _{j+1}/\l  _j$ are positive
real numbers.  \\

We say that a skew-Hermitian form $h$ defined over $E/F$ is {\bf totally anisotropic} if it is 
anisotropic over $\C/\R$, for all embeddings of $E$ into $\C$, or more precisely
for all archimedean places of $F$ into $\R$. Note that for   
a skew-Hermitian form $h$ defined over $E$, anisotropy over $\C$ implies anisotropy over $E$, 
but the converse does not hold. 
\end{notation}

Now let $d$ and $k_1, k_2, k_3$ be as above. Write $x_j=e^{2\pi i \frac{k_j}{d}}$, for $j=1,2,3$. 
Let $E = \Q(e^{\frac{2\pi i}{d}})$ be the $d$-th cyclotomic  field, and let  
$F  =  \Q(\cos(\frac{2\pi}{d}))$  be the maximal totally real subfield of $E$.    \\

The matrix  
\[h= \begin{pmatrix} \frac{1-x_1x_2}{(1-x_1)(1-x_2)} & -\frac{x_2}{1-x_2} \\ -\frac{1}{1-x_2} & \frac{1-x_2x_3}{(1-x_2)(1-x_3)}  \end{pmatrix}  \] 
is easily seen to define a skew-Hermitian form over $E/F$, i.e., $^t h= - {\overline h}$.
% where the bar refers to complex conjugation). 
The determinant $\det (h)$ of $h$ is also easily computed to be 

\[\frac{1-x_1x_2x_3}{(1-x_1)(1-x_2)(1-x_3)}=- \frac{1}{4} \cdot
\frac{\sin (\frac{\pi (k_1+k_2+k_3)}{d})}
{\sin(\frac{\pi k_1}{d}) \sin (\frac{\pi k_2}{d}) \sin (\frac{\pi k_3}{d})} \in F. \]

%Note that $\det(h) \in F$.

\begin{lemma} \label{Sandhermitian} We have:  
\begin{enumerate}
  \item [i)] The skew-Hermitian form $h$ is
totally anisotropic if and  only if $\det(h)$ is a totally
negative element of $F$. 

 \item [ii)] The  numbers $\frac{k_j}{d}$, for $j = 1,2,3$,  satisfy the  condition (\ref{S})  if and
only if the skew-Hermitian form $h$ is totally anisotropic.
\end{enumerate}
\end{lemma}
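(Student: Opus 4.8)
\textbf{Proof plan for Lemma \ref{Sandhermitian}.}

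For part (i), the plan is to reduce the totally-anisotropic condition to a statement about $2\times 2$ skew-Hermitian forms over $\C/\R$ place by place. Fix an embedding $E\hookrightarrow\C$, so that each $x_j=e^{2\pi i k_j s/d}$ for the corresponding $s\in(\Z/d\Z)^*$. A $2\times2$ skew-Hermitian form over $\C/\R$ is $iH$ with $H$ Hermitian, and it is anisotropic iff $H$ is definite. The top-left entry $h_{11}=\tfrac{1-x_1x_2}{(1-x_1)(1-x_2)}$ is, up to the positive scalar $\tfrac1{|(1-x_1)(1-x_2)|^2}$, equal to $(1-x_1x_2)\overline{(1-x_1)}\,\overline{(1-x_2)}$; a short computation shows $\mathrm{Im}$ of this has a definite sign governed by whether $\{\tfrac{k_1s}{d}\}+\{\tfrac{k_2s}{d}\}$ is less than or greater than $1$ (equivalently, whether $x_1x_2$ lies in the upper or lower half of the relevant arc). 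Similarly $\det(h)=\tfrac{1-x_1x_2x_3}{(1-x_1)(1-x_2)(1-x_3)}$ is real, and its sign is controlled by $k_1+k_2+k_3 \bmod d$ via the sine formula already displayed. Now, an indefinite Hermitian form has negative determinant and a definite one positive determinant (for $iH$ skew-Hermitian, $\det(iH)=-\det H$, so the skew-Hermitian form is anisotropic at this place iff $\det h<0$). Taking the conjunction over all embeddings gives: $h$ is totally anisotropic iff $\det(h)$ is totally negative. The only genuinely careful point is the sign bookkeeping in passing from $\det(iH)$ to $\det H$ and confirming that $2\times 2$ skew-Hermitian anisotropy is detected purely by the sign of the determinant — which is exactly the diagonal criterion in the Notation box applied after a Gram–Schmidt step.

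For part (ii), I would combine part (i) with the explicit formula
\[
\det(h)=-\frac14\cdot\frac{\sin\!\big(\tfrac{\pi(k_1+k_2+k_3)}{d}\big)}{\sin\!\big(\tfrac{\pi k_1}{d}\big)\sin\!\big(\tfrac{\pi k_2}{d}\big)\sin\!\big(\tfrac{\pi k_3}{d}\big)}.
\]
Under the embedding corresponding to $s$, $\det(h)$ becomes the same expression with each $k_j$ replaced by $k_js$, i.e. by its residue $l_j$ with $1\le l_j\le d-1$. Each factor $\sin(\tfrac{\pi l_j}{d})$ is strictly positive, so the sign of $\det(h)$ at the place $s$ is $-\mathrm{sgn}\,\sin\!\big(\tfrac{\pi(l_1+l_2+l_3)}{d}\big)$. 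Writing $\Sigma_s=\{\tfrac{k_1s}{d}\}+\{\tfrac{k_2s}{d}\}+\{\tfrac{k_3s}{d}\}=\tfrac{l_1+l_2+l_3}{d}\in(0,3)$, the sine is positive exactly when $\Sigma_s\in(0,1)\cup(2,3)$ and negative when $\Sigma_s\in(1,2)$; the boundary values $\Sigma_s\in\{1,2\}$ would force $\det h=0$, i.e. isotropy, which by part (i) is excluded precisely when $\det h$ is totally negative, and they correspond exactly to the failure of condition (\ref{S}) as recorded in Remark [4]. Hence $\det(h)$ is negative at the place $s$ iff $\Sigma_s\in(0,1)\cup(2,3)$, which by Remark [4] is exactly condition (\ref{S}) at $s$. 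Quantifying over all $s\in(\Z/d\Z)^*$ — equivalently over all archimedean places of $F$ — we get that $\det(h)$ is totally negative iff condition (\ref{S}) holds, and by part (i) this is iff $h$ is totally anisotropic. (One should note that the replacement $s\mapsto -s$ swaps $\Sigma_s$ with $3-\Sigma_s$, consistent with the symmetry built into condition (\ref{S}).)

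The main obstacle I anticipate is part (i): correctly establishing that anisotropy of the $2\times2$ skew-Hermitian form at a given place is equivalent to the single inequality $\det h<0$. This needs the observation that the diagonalization criterion from the Notation paragraph (successive ratios $\l_{j+1}/\l_j$ positive real) for a $2$-variable form collapses to: the form $\mathrm{diag}(\l_1,\l_2)$ with $\l_j\in i\R\setminus\{0\}$ is anisotropic iff $\l_2/\l_1>0$ iff $\l_1\l_2<0$ iff $\det<0$ — and that the determinant's sign is an invariant of the congruence class, so it suffices to check it on a diagonalization. Verifying that the off-diagonal entry does not obstruct this (i.e. that after a Gram–Schmidt step the form is genuinely diagonal of the stated type, using that $h_{11}\ne0$ under condition (\ref{S}), with the degenerate case $h_{11}=0$ handled separately) is the one place where care is required; everything else is the trigonometric bookkeeping sketched above.
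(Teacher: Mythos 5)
Your proposal is correct and follows essentially the same route as the paper: part (i) via Gram--Schmidt reduction to a diagonal form $\mathrm{diag}(i\lambda_1,i\lambda_2)$ and the observation that $2\times 2$ anisotropy is detected by $\det(h)<0$ (with the degenerate case $h_{11}=0$ noted), and part (ii) via the sine formula for $\det(h)$ together with Remark [4], the paper merely phrasing the sign computation through $(-1)^{[t]}$ and the identity $[x+y+z]-[x]-[y]-[z]=[\{x\}+\{y\}+\{z\}]$ rather than through the positivity of $\sin(\pi l_j/d)$ for the residues $l_j$. The differences are purely presentational.
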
 

\begin{proof} Fix an embedding of $E$ into $\C$. 
The Gram-Schmidt process says that under this embedding $h$ is equivalent to
the skew-Hermitian form  $h'=\begin{pmatrix} i\lambda _1  & 0 \\  0 &
i\lambda   _2\end{pmatrix}$  for  some   real  numbers $\lambda   _1,  \lambda
_2$. Moreover, the principal minors of $h$ and $h'$ are the same: $\det
(h)=\det(h')$ and $i\lambda _1=\frac{1-x_1x_2}{(1-x_1)(1-x_2)}$. \\

The form $h$ is anisotropic if and only if the equivalent form $h'$
is  anisotropic, and  the  latter  holds   if  and  only  if   the  fraction
$\frac{\lambda _2}{\lambda _1}$ is positive. This fraction may also be
written as
\[\frac{(i\lambda    _1)(i\lambda    _2)}{(i\lambda   _1)^2}=\frac{\det
(h)}{-\lambda  _1^2} .\]  Thus  $h$ is anisotropic if and  only if
$\det (h)$ is negative. This argument is independent of the 
embedding of the field $E$ into $\C$ and  hence $h$ is totally anisotropic 
if and only if  its determinant  is totally negative.  This proves  the first
part of the lemma. \\

If $t\in \R \setminus \Z$, it  is easily seen that the sign of $\sin
(\pi  t)$  is  $(-1)^{[t]}$,  where  $[t]$ is  the  integral  part  of
$t$. Therefore, by the paragraph  preceding the statement of the lemma, the sign of the
determinant of $h$ is seen to be
\[- (-1)^{[\frac{k_1+k_2+k_3}{d}]
-[\frac{k_1}{d}]-[\frac{k_2}{d}]-[\frac{k_3}{d}]}.\]   Now,   for  any
three real  numbers $x,y,z$, we  have $$[x+y+z]-[x]-[y]-[z]=[\{x \}+\{y
\}+\{z\}].$$   Therefore, the  sign of  the  determinant of  $h$ is  $-
(-1)^{[\Sigma_1 ]}$  where $\Sigma_1$  is the sum $\sum  _{j=1}^3 \{\frac
{k_j}{d}  \}$. By  the condition  (\ref{S}) (see  [4] of  the Remarks
following the definition of  (\ref{S})), the integral part of $\Sigma_1$
is either $0$ or  $2$ and hence the sign of the  determinant of $h$ is
negative. \\

The  same  argument  shows  that  the determinant  of  $h_s$  is  also
negative, where $h_s$ is the skew-Hermitian form which is obtained from $h$
by   changing   $x_j=e^{2\pi   i \frac{k_j}{d}}$   to   $x_j^{(s)}=e^{2\pi   i
\frac{k_js}{d}}$. Here  $s\in (\Z/d\Z)^*$ is  viewed as an  element $(s)$ of
the  Galois   group  of   the  cyclotomic  extension   $E/\Q$.  The
determinant  of $h_s$  is  $\det (h)^{(s)}$,  and  is negative,  whence
$\det(h)$ is totally  negative. This proves the ``only  if'' part of the
second part of the Lemma. \\

The ``if'' part follows by retracing the proof of the ``only if'' part
backwards.
\end{proof}

\subsection{Relation  of the skew-Hermitian form  $h$ 
with  a  subgroup of $U(h)$}

Let ${\mathcal O}_E$, ${\mathcal O}_F$ be the ring of integers of 
$E$ and $F$. 
Suppose $\Gamma  \subset GL_2({\mathcal O}_E)$ is  the subgroup generated  by the
matrices
\[A= \begin{pmatrix}  x_1x_2 & 1-x_1 \\  0 & 1  \end{pmatrix}, \quad B
=\begin{pmatrix} 1 & 0 \\  x_2(1-x_3) & x_2x_3\end{pmatrix}. \] It can
be shown (for example, see \cite {V}, Lemmas 14 and 15 and Proposition
18), that $\Gamma$ preserves  the skew-Hermitian form $h$ of the preceding
subsection  and  that  $\Gamma  $  acts irreducibly  on  $E^2$  (the
irreducibility is implied  by the fact that the  determinant of $h$ is
non-zero,   since   it  is   a   nonzero  multiple   of
$1-x_1x_2x_3$. The number $1-x_1x_2x_3$ is nonzero since the sum $\sum
\frac{k_j}{d}$ is not an integer under the assumption (\ref{S})).

\begin{lemma} \label{Sandfinite} The group  $\Gamma $ is finite if and
only if  the condition (\ref{S}) holds for  the numbers $\frac{k_j}{d}$
$(j=1,2,3)$.
\end{lemma}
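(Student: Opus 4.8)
The plan is to connect finiteness of $\Gamma$ to the total anisotropy of $h$ via Lemma \ref{Sandhermitian}, and then to prove the two-way implication between finiteness and anisotropy directly.

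First I would establish the ``easy'' direction. Suppose $\Gamma$ is finite. Since $\Gamma$ preserves $h$ and acts irreducibly on $E^2$ (as noted before the lemma, using $\det h \neq 0$), the averaging trick produces a $\Gamma$-invariant positive definite Hermitian form: for any embedding $E \hookrightarrow \C$, average an arbitrary positive definite Hermitian form over the finite group $\Gamma$. Because $\Gamma$ acts irreducibly, any two $\Gamma$-invariant Hermitian forms are proportional (Schur's lemma), and in particular the given form $h$, which over $\C/\R$ has the shape $iH$ for some Hermitian $H$, must have $H$ a real scalar multiple of a positive definite form, hence $H$ is definite. This says precisely that $h$ is anisotropic over that embedding; running this over every archimedean place of $F$ shows $h$ is totally anisotropic, and then Lemma \ref{Sandhermitian}(ii) gives condition (\ref{S}). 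One subtlety I would address: $\Gamma \subset GL_2(\mathcal{O}_E)$ is a priori a single abstract group, but finiteness is embedding-independent, so the averaging works simultaneously at all places.

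For the converse — condition (\ref{S}) holds, hence by Lemma \ref{Sandhermitian} the form $h$ is totally anisotropic — I would argue that $\Gamma$ is finite because it is a discrete subgroup of a compact group. The unitary group $U(h)(F \otimes \R) = \prod_v U(h_v)$ is a product over the archimedean places $v$ of $F$; total anisotropy means each $h_v = \pm i H_v$ with $H_v$ definite, so each $U(h_v) \cong U(2)$ is compact, and the product is compact. On the other hand $\Gamma \subset GL_2(\mathcal{O}_E)$ sits inside $U(h)(\mathcal{O}_F)$ (it preserves $h$ and has entries in $\mathcal{O}_E$), which embeds as a discrete subgroup of the adelic-at-infinity group $\prod_v U(h_v)$ via the diagonal of all archimedean embeddings — discreteness because $\mathcal{O}_E$ is a lattice in $E \otimes \R$ and matrix entries are controlled. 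A discrete subgroup of a compact group is finite, so $\Gamma$ is finite. Alternatively one can invoke the Borel--Harish-Chandra finiteness theorem: $\Gamma$ is an arithmetic subgroup of the $F$-group $U(h)$, and an arithmetic group in a group that is anisotropic (compact) at all archimedean places is finite.

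I expect the main obstacle to be the bookkeeping around discreteness and the identification of $\Gamma$ as an arithmetic (or at least integral) subgroup of $U(h)$ over $F$ rather than merely a subgroup of $GL_2(\mathcal{O}_E)$ preserving a form with entries in $E$: one must check that preserving $h$ cuts $\Gamma$ out as the $\mathcal{O}_F$-points (up to finite index/commensurability) of a reductive $F$-group, so that classical finiteness theorems apply; the cited reference \cite{V} (Lemmas 14, 15, Proposition 18) supplies the needed structural facts about $\Gamma$ and $h$, so I would lean on that. The anisotropy-implies-compact step is then the formal heart, and it is exactly the content of Lemma \ref{Sandhermitian} combined with the description of anisotropic skew-Hermitian forms over $\C/\R$ given in the Notation preceding it.
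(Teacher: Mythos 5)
Your proposal is correct and follows essentially the same route as the paper: the paper reduces Lemma \ref{Sandfinite} to the statement that $\Gamma$ is finite if and only if $h$ is totally anisotropic (via Lemma \ref{Sandhermitian}), and then proves that equivalence in Lemma \ref{definite} by exactly your two arguments --- averaging a positive definite Hermitian form plus Schur's lemma for one direction, and discreteness of $U(h)({\mathcal O}_F)$ inside the compact group $U(h)(F\otimes_{\Q}\R)$ for the other. The only cosmetic difference is that the paper states this finiteness--anisotropy equivalence once for general $n$ and cites it here, whereas you write the argument out inline for $n=2$.
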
 

\begin{proof}   It    is   enough   to   show,    because   of   Lemma
\ref{Sandhermitian}, that  $\Gamma$ is finite  if and only  if $h$ is
totally anisotropic. This is proved in Lemma~\ref{definite} below, for
general $n \geq 2$.
%
%If  $h$  is totally  anisotropic,  then  its  unitary group  $U_{\infty}=
%U(h)(F \otimes  _{\Q}  \R)$ is  compact,  and  since  
%$\Gamma  \subset U(h)({\mathcal O}_F)$  is a discrete  subgroup of  $U_{\infty}$, 
%it  follows that $\Gamma $ is finite. \\
%
%If  $\Gamma $  is finite,  then by  the averaging  process,  $\Gamma $
%preserves a  skew-Hermitian anisotropic form $h'$,  and by irreducibility
%of the action of  $\Gamma$, there is a unique such  form up to scalar
%multiples. Hence $h'$ is a  multiple of $h$. Therefore, $h$ is totally
%anisotropic.
\end{proof}

\subsection{The dihedral case}

Suppose  that the  finite group  $\Gamma \subset GL_2({\mathcal O}_F)$  
of  the preceding subsection has the property that  it has an abelian 
normal subgroup of index two.  We then say that $\Gamma $  is {\bf dihedral}.  
Note that $\Gamma $ is generated by two elements (namely $A,B$). \\

\begin{lemma} $\Gamma$ is dihedral if  and only if two  of the three
elements $A$, $B$, $C=AB$  have trace zero,  i.e., if and  only if two  
of the numbers $x_1x_2, x_2x_3, x_3x_1$ are equal to $-1$.
\end{lemma}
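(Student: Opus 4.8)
The plan is to isolate a purely group-theoretic statement and then prove its two directions. First I would record the trace computations, which are immediate. Since $A$ is upper triangular with diagonal $(x_1x_2,1)$, $\operatorname{tr}A=1+x_1x_2$; since $B$ is lower triangular with diagonal $(1,x_2x_3)$, $\operatorname{tr}B=1+x_2x_3$; and multiplying out, $C=AB$ has $(1,1)$-entry $x_1x_2+(1-x_1)x_2(1-x_3)$ and $(2,2)$-entry $x_2x_3$, so $\operatorname{tr}C=x_2+x_1x_2x_3=x_2(1+x_1x_3)$, which vanishes iff $x_1x_3=-1$ because $x_2\neq 0$. Thus the two conditions ``two of $A,B,C$ have trace zero'' and ``two of $x_1x_2,x_2x_3,x_3x_1$ equal $-1$'' are literally the same, and it remains to prove: $\Gamma$ is dihedral if and only if at least two of $A,B,C$ have trace zero. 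Throughout I use that $\Gamma$ is finite (given) and acts irreducibly on $\C^2$ (which follows, as in the previous subsection, from $\det h\neq 0$), and that $A,B,C$, having finite order, are semisimple.

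For the ``only if'' direction, suppose $\Gamma$ has an abelian normal subgroup $N$ of index two. Being finite abelian, $N$ is diagonalizable; if $N$ were scalar then $\Gamma$ would be a central extension of $\Z/2\Z$, hence abelian, contradicting irreducibility, so some $n\in N$ has two distinct eigenvalues. Choose coordinates so that $N$ is diagonal; then the coordinate axes $L_1,L_2$ are the only eigenlines of $n$, and since $N\trianglelefteq\Gamma$ every $g\in\Gamma$ carries $\{L_1,L_2\}$ to itself, i.e.\ $g$ is either diagonal or antidiagonal. The diagonal elements form a subgroup containing $N$; it cannot be all of $\Gamma$ (else $\Gamma$ fixes $L_1$), so by index it equals $N$, and hence every element of $\Gamma\setminus N$ is antidiagonal and in particular has trace zero. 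Finally, since $\Gamma=\langle A,B\rangle$ and $[\Gamma:N]=2$, not both of $A,B$ lie in $N$; using $C=AB$ and that the product of two elements outside $N$ lies in $N$, one checks that in each case exactly two of $A,B,C$ lie in $\Gamma\setminus N$, so at least two of them have trace zero.

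For the ``if'' direction, pass to the image $\overline\Gamma$ of $\Gamma$ in $PGL_2(\C)$. A finite-order invertible matrix has trace zero exactly when its two eigenvalues are negatives of one another, equivalently when it is non-scalar with scalar square, equivalently when its image in $PGL_2(\C)$ is an involution. So the hypothesis says that at least two of $\overline A,\overline B,\overline C$ are involutions; since any two of $A,B,C$ generate $\Gamma$ (because $C=AB$), the corresponding two involutions generate $\overline\Gamma$, and a finite group generated by two involutions is dihedral. Let $\overline N\subset\overline\Gamma$ be a cyclic normal subgroup of index two and let $N$ be its preimage in $\Gamma$; then $N$ is normal of index two and contains the central subgroup $Z=\Gamma\cap\C^{*}I$ with $N/Z$ cyclic, so $N$ is a central extension of a cyclic group and therefore abelian. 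Hence $\Gamma$ has an abelian normal subgroup of index two, i.e.\ $\Gamma$ is dihedral.

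Apart from the routine trace computation, the point that needs the most care is the comparison between ``dihedral'' for $\Gamma$ and for $\overline\Gamma$: an element of $\Gamma$ with trace zero need not have order two (for instance $C^2=x_2^2I$ in general), so one cannot work with genuine involutions inside $GL_2$, and it is the projective picture together with the elementary fact that a central extension of a cyclic group is abelian that makes the lift go through. The other delicate use is that of irreducibility in the ``only if'' direction, both to exclude the scalar case for $N$ and to force the index-two quotient to interchange the two eigenlines of $N$.
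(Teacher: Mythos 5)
Your proof is correct. The forward direction (dihedral implies two vanishing traces) is essentially the paper's own argument: one shows that $N$ cannot be scalar, diagonalizes it, observes that any $g\notin N$ must swap the two common eigenlines and is therefore antidiagonal and traceless, and then uses the fact that any two of $A,B,C$ generate $\Gamma$ to conclude that exactly two of them lie outside $N$; your trace computation $\operatorname{tr}C=x_2(1+x_1x_3)$ agrees with the paper's. Where you genuinely add something is the converse. The paper's proof, as written, establishes only the forward implication and then declares the lemma proved, even though the statement is an equivalence; your argument --- a traceless element of finite order has scalar square and so projects to an involution in $PGL_2(\C)$, two such involutions generate a dihedral image $\overline\Gamma$, and the preimage of a cyclic index-two subgroup of $\overline\Gamma$ is abelian because it is a central extension of a cyclic group --- closes that gap cleanly. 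Your remark that one cannot work with honest involutions inside $GL_2(\C)$ (since $C^2$ is in general only scalar, not the identity) pinpoints exactly why the projective detour is necessary. The converse is not actually needed for the classification that follows, which only uses that the dihedral case forces two of the sums $k_i+k_j$ to be congruent to $d/2$, but your version is the one that makes the lemma true as stated.
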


\begin{proof} Suppose $\Gamma  $ is dihedral and $N$ is an abelian normal
subgroup of index two. Since $\Gamma $ acts irreducibly on $\C ^2$, it
follows that  $\Gamma$ is not  abelian, and hence there  is an element
$g \notin  N$ in $\Gamma  $. Now  $N$ cannot  consist of  scalars. For,
otherwise the group generated by $N$ and $g$ would be abelian. \\

Let now $g \notin N$ be arbitrary. Then $g$ normalises (but  does not centralise)  the non-scalar
abelian  (and   hence  may  be   assumed  to  be   diagonal)  subgroup
$N$. Therefore, $g$ acts on $N$  by the map switching the two diagonal
entries of  an element $a\in N$. Hence  $g$ is of the  form $tw$ where
$w=\begin{pmatrix} 0 & 1 \\ 1  & 0\end{pmatrix}$ and $t$ is a diagonal
matrix; hence the element $g\notin N$ has trace zero. \\

Since  $\Gamma$  is  generated  by  any  two  of  the  three  matrices
$A,B,C=AB$, it follows  that two of these elements  cannot lie in $N$;
therefore, two of  the elements, say $A$ and $B$  have zero trace; this
means that  $x_1x_2+1=0, x_2x_3+1=0$  (a small computation  shows that
${\rm trace}(C)=(1+x_1x_3)x_2$; hence trace  $C$ being zero implies that
$x_2x_3=-1$. Thus a similar statement holds if $A,C$ do not lie in the 
subgroup $N$: $x_1x_2=x_2x_3=-1$). This proves the lemma.
\end{proof}

The lemma  means that the numbers $\frac{k_1+k_3}{d}=\frac{1}{2}$ and
$\frac{k_2+k_3}{d}=\frac{1}{2}$ (say);                        suppose
$\frac{k_1+k_2}{d}=\frac{p}{m}$, for some $p$ coprime to $m$.      
Then      it     follows     that
$\frac{k_1}{d}=\frac{k_2}{d}=\frac{p}{2m}$          and         that
$\frac{k_3}{d}=\frac{m-p}{2m}$.  This  is   the  first  part  of  Theorem
\ref{n=2}.

\subsection{Finite non-dihedral subgroups $\Gamma$}

It is well known that  any irreducible non-dihedral finite subgroup of
$PGL_2(\C)$  is  the  group  of  symmetries  of  one  of  the  platonic
solids. We however  do not use this. For the  sake of a self-contained
exposition, we will instead prove a weaker form which will suffice for
the proof of Theorem~\ref{n=2} (the  proof is adapted from Section 4,
Chapter 5, \cite{LT}).

\begin{proposition}  Suppose  $\G \subset  GL_2(\C)$  is a  finite
non-dihedral irreducible  subgroup with $Z$ the centre of $\Gamma$. 
Then the  order $m$ of any  element of the quotient 
$\G /Z$ does not exceed $5$, i.e., $m=1,2,3,4,5$. 
\end{proposition}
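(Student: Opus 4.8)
The plan is to work with the finite group $\bar\G=\G/Z$ inside $PGL_2(\C)$ and exploit a counting/averaging argument over the conjugacy classes of cyclic subgroups, in the spirit of the classical classification of finite rotation groups. First I would observe that since $\G$ is finite and irreducible, after averaging a Hermitian form we may assume $\G\subset U(2)$, and hence $\bar\G$ is a finite subgroup of $PU(2)\cong SO(3)$; thus every element of $\bar\G$ is (the image of) an elliptic element, i.e.\ each nontrivial $g\in\bar\G$ has a well-defined pair of fixed points on $\mathbb{P}^1(\C)$. Let $N=|\bar\G|$. For each nontrivial element I would record its two fixed points (``poles''), partition the set of all poles into $\bar\G$-orbits, and apply Burnside/orbit-counting to the action of $\bar\G$ on the set of poles.

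The key computation is the standard one: the number of ordered pairs $(g,P)$ with $g\neq 1$ fixing the pole $P$ equals $\sum_{g\neq 1}2=2(N-1)$ on one hand, and on the other hand, summing over the $\bar\G$-orbits $\mathcal{O}_1,\dots,\mathcal{O}_r$ of poles, equals $\sum_{j}|\mathcal{O}_j|\,(|\mathrm{Stab}(P_j)|-1)$ where $P_j\in\mathcal{O}_j$ and $|\mathcal{O}_j|=N/|\mathrm{Stab}(P_j)|$. Writing $m_j=|\mathrm{Stab}(P_j)|$ (the order of the cyclic stabilizer, so $m_j\ge 2$), this yields the identity
\[
2-\frac{2}{N}=\sum_{j=1}^{r}\left(1-\frac{1}{m_j}\right).
\]
Each summand lies in $[\tfrac12,1)$, so the left side being $<2$ forces $r\le 3$, and $r=1$ is impossible since then $2-2/N<1$. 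For $r=2$ one gets $\tfrac2N=\tfrac1{m_1}+\tfrac1{m_2}$, forcing $m_1=m_2=N$ (the cyclic case); but $\bar\G$ cyclic makes $\G$ abelian-by-central hence reducible or dihedral-type, contradicting the hypotheses, or at any rate forces $\bar\G$ cyclic which we can handle directly. So $r=3$, and $1/m_1+1/m_2+1/m_3=1+2/N>1$ with $2\le m_1\le m_2\le m_3$. Elementary casework on this Diophantine inequality gives exactly $(2,2,m_3)$ (dihedral, excluded by hypothesis), $(2,3,3),(2,3,4),(2,3,5)$, so $m_3\in\{3,4,5\}$ and every $m_j\le 5$; since every cyclic subgroup of $\bar\G$ is conjugate into some stabilizer $\mathrm{Stab}(P_j)$, the order of any element of $\bar\G$ divides some $m_j\le 5$, giving $m\in\{1,2,3,4,5\}$ as claimed.

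The main obstacle, and the step I would be most careful about, is the bookkeeping that turns ``$\G$ irreducible non-dihedral'' into usable constraints on $\bar\G$: I need that $\bar\G$ is not cyclic (else $\G$ would be reducible, since a cyclic subgroup of $PGL_2$ lifts to a group of simultaneously diagonalizable matrices), which eliminates $r\le 2$; and I need that the $(2,2,n)$ solution genuinely corresponds to the dihedral case, so that excluding dihedral $\G$ excludes it. Both points are routine but must be stated cleanly, since the proposition's phrasing ``the order $m$ of any element of $\G/Z$'' is exactly the conclusion that falls out once the poles-counting identity is in hand. I would present the poles argument as the core, relegate the lifting remarks to a sentence each, and skip the verification that the three exceptional triples actually occur (the proposition only needs the upper bound $m\le 5$).
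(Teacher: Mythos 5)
Your proposal is correct and follows essentially the same route as the paper: both count the incidence set of (non-central element, fixed point) pairs in two ways to derive the class equation $1+\tfrac{2}{g}=\tfrac{1}{g_1}+\tfrac{1}{g_2}+\tfrac{1}{g_3}$ (your $2-\tfrac{2}{N}=\sum_j(1-\tfrac{1}{m_j})$ is the same identity), force $t=3$ by ruling out $t=1,2$ via irreducibility, and exclude the $(2,2,n)$ triple as the dihedral case, leaving $(2,3,3),(2,3,4),(2,3,5)$. The only cosmetic differences are that you pass to $SO(3)$ via an averaged Hermitian form and omit central elements from the count, while the paper works directly in $GL_2(\C)$ and includes them before dividing by the order of the centre.
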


\begin{proof}  Consider the  action  of the  group  $GL_2(\C)$ on  the
projective line ${\mathbb P}^1(\C)\simeq  GL_2(\C)/B$ where $B$ is the
group  of  upper triangular  matrices.  This  is  the action  by  left
translation on  $GL_2(\C)/B$.  Restrict the  action to $\G$.  If $g\in
\G$ is not a scalar,  then $g$ (being diagonalisable), has exactly two
fixed  points  in ${\mathbb  P}^1(\C)$.  Moreover,  since  $\G $  acts
irreducibly on $\C^2$,  it follows that the centre  of $\G$ is exactly
the group  of scalar  matrices which  lie in $\G$.  Denote by  $g$ the
order of the quotient group $\G/Z$  and by $z$ the order of the centre
of $\G$. Then the order of $\G$ is $gz$. \\

Denote by $X$  the subset of ${\mathbb P}^1(\C)$  of points which
are fixed by some non-central  element of $\G$. Since each non-central
element of $\G$ has only two fixed points, it follows that $X$ is
finite. We claim that the first projection of the set $\Omega=\{(\gamma,x)\in \G
\times X: \gamma x=x \}$ to $\Gamma $ is surjective. If $\gamma \in \G$ is in
the centre  of $\G$,  then $\gamma$  fixes all of  the projective  line and
hence  fixes all  of $X$;  therefore, the  preimage of  $\gamma$ under
$\Omega \ra \G$  is all of $(\gamma, X)$. If  $\gamma \in \G \setminus Z$,  then it has
two fixed  points in ${\mathbb  P}^1(\C)$ both of which  by definition
lie in $\Omega$. We have therefore the equality 
\begin{equation}    
  \label{first}    
  \mathrm{Card}(\Omega) =  \mathrm{Card}(X)   z    + 2(gz-z). 
\end{equation} 
Note that $\Gamma$ acts on $X$. 
Write $X$  as a disjoint union of orbits
$\G x_i$, whose number is  $t$ say. Each isotropy $\G _{x_i}$ contains
the centre $Z$  of $\G$ and if $g_i$ denotes the  order of the quotient
group $\G_{x_i}/Z$ then
\begin{equation} 
  \label{second} 
  \mathrm{Card}(X) = \sum _{i=1}^t \mathrm{Card}(\G x_i) = \sum   \frac{g}{g_i}.
\end{equation}   
Now   consider  the   second
projection $\Omega \ra X$. The preimage of any point $x \in X$ is
$(\G_x,x)$;  the order of the isotropy of  any element of the
orbit $\G  x_i$ is the  same, namely $g_iz= \mathrm{Card}(\G _{x_i})$.  
Therefore, we get
\begin{equation} 
  \label{third}  
  \mathrm{Card}(\Omega)=\sum _{i=1}^t  \sum _{x\in \G x_i}     \mathrm{Card}     (\G _{x_i})     
  =\sum_{i=1}^t     \frac{g}{g_i} (g_iz) = gtz. 
\end{equation}  
Comparing the  last three equations,  we see that
\[ \mathrm{Card}(\Omega) = 2(gz-z)+ (\sum _{i=1}^t \frac{g}{g_i})z= gtz .\]
Dividing throughout by $z$ in the last equation, we get 
\begin{equation}    \label{classequation}   2(g-1)+    \sum   _{i=1}^t
\frac{g}{g_i}=gt.\end{equation}

We first  show that $t\leq 3$,  by using the last  equality. Note that
since $\G_i= \G_{x_i}$  is the isotropy of an  element (namely $x_i$) of
$X$  we  have $\G_i\neq  Z$  and  hence  $g_i \> (= \mathrm{Card}(\G_i/Z))  \geq
2$. Therefore,  by (\ref{classequation}), we see  that $gt\leq 2(g-1)+
t\frac{g}{2}= 2g-2+\frac{gt}{2}$.  Dividing throughout by  $g$ in this
inequality   and    rearranging   terms   we    get   $\frac{t}{2}\leq
2-\frac{2}{g}<2$, i.e., $t\leq 3$, since $t$ is an integer. \\

We  now  eliminate  the  possibility  that  $t=1,2$.  If  $t=1$,  then
(\ref{classequation})      shows      that     $2g-2+\frac{g}{g_1}=g$,
i.e.,  $g+\frac{g}{g_1}=2$. Since  $g_i$ divides  $g$ ($g_i$  being the
order of  the subgroup  $\G_i/Z$ divides the  order $g$ of  $\G/Z$), it
follows that $g=1$ and $g=g_i$. But $g=1$ means that $\G/Z$ is trivial,
i.e.,  $\G$ is  central and  therefore not  an irreducible  subgroup of
$GL_2(\C)$.   Hence  $t\neq   1$.  If   $t=2$,  then   again  equation
(\ref{classequation})   shows   that  $2=\frac{g}{g_1}+\frac{g}{g_2}$,
which means that $g=g_1=g_2$ and hence $\G= \G_1$ $(= \G_2)$ and is therefore
an  abelian  group;  hence  $\G$  cannot  be  irreducible.  Therefore,
$t=3$. \\

>From (\ref{classequation}) we  now get (after dividing by  $g$ on both
sides)
\begin{equation}                                 
  \label{orbitequation}
  1+\frac{2}{g}=\frac{1}{g_1}+\frac{1}{g_2}+\frac{1}{g_3}.
\end{equation}
Assume,  as we  may, that  $g_1 \leq g_2  \leq g_3$.  Then  the equality
(\ref{orbitequation})  shows that  $1<3\frac{1}{g_1}$,  i.e., $g_1  <3$;
since $g_1\geq 2$, it follows that $g_1=2$.

We      again       get      from      (\ref{orbitequation})      that
$1+\frac{2}{g}=\frac{1}{2}+\frac{1}{g_2}+\frac{1}{g_3}$,  with  $2\leq
g_2\leq   g_3$.  Therefore,  $1<\frac{1}{2}+\frac{2}{g_2}$;   that  is,
$g_2<4$.   Therefore,  $g_2=2,3$.   If  $g_2=2$,   then   equation
(\ref{orbitequation})                    shows                    that
\[1+\frac{2}{g}=\frac{1}{2}+\frac{1}{2}+\frac{1}{g_3}.\]       
Therefore,
$g=2g_3$; in other words, $\G_3$ is an abelian subgroup of index $2$ in
$\G$, which means that $\G$ is dihedral, contradicting the assumptions
of the proposition. Therefore, $g_2=3$ is the only possibility. \\

Now   (\ref{orbitequation})  shows   that  $1+\frac{2}{g}=\frac{1}{2}+
\frac{1}{3}+\frac{1}{g_3}$,    with    $g_3\geq    3$.   Hence,    $1<
\frac{5}{6}+\frac{1}{g_3}$,  which yields  $g_3=3,4,5$. Hence  we have
proved that every  non-central element of $\G$ lies  in a conjugate of
one of  the subgroups $G_1$, $G_2$  and $G_3$ which  have orders $2,3$
and $g_3=3,4,5$ respectively. This proves the proposition.
\end{proof}

We  now return to  the situation  of Lemma  \ref{Sandfinite}. Consider
$x_j=e^{2\pi  i \frac{k_j}{d}}$  ($j=1,2,3$).  The irreducible  finite
subgroup $\G$ is generated by $A=\begin{pmatrix} x_1x_2 & 1-x_1 \\ 0 &
1  \end{pmatrix}$  and  $B=\begin{pmatrix}  1  &  0  \\  x_2(1-x_3)  &
x_2x_3  \end{pmatrix}$.  The   image  of  $\G$  in  $PGL_2(\C)$
contains the images $A'$ and $B'$ of $A$ and $B$ respectively; clearly
the orders of  $A'$ and $B'$ are respectively the  orders of the roots
of  unity $x_1x_2=e^{2\pi i  \frac{k_1+k_2}{d}}$ and  $x_2x_3=e^{2\pi i
\frac{k_2+k_3}{d}}$. \\

A   computation  shows  that the matrix
$$C = AB = \begin{pmatrix}  x_2(1-x_3+x_1x_3)  &  x_2x_3(1-x_1) \\  x_2(1-x_3)  &
x_2x_3\end{pmatrix}$$ 
has eigenvalues  $x_2$ and  $x_1x_2x_3$; clearly
the  order  of  the image  of  $C$  in  $PGL_2(\C)$  is the  ratio  of
these eigenvalues       $\frac{x_1x_2x_3}{x_2}= x_3x_1=e^{2\pi       i
\frac{k_3+k_1}{d}}$.  From the proposition follows the

\begin{corollary}  \label{mucorollary}   If  condition  (\ref{S})
holds, and $\frac{k_1}{d},\frac{k_2}{d},  \frac{k_3}{d}$ is not in the
dihedral  case,  then  the  fractions $\mu  _1=\frac{k_2+k_3}{d},  \mu
_2=\frac{k_3+k_1}{d}, \mu _3=\frac{k_1+k_2}{d}$  are in the finite set
$S$   of  fractions  of   the  form   $\frac{t}{u}$  with   $t<u$  and
$u=1,2,3,4,5$.
\end{corollary}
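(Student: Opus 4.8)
The plan is to read off Corollary~\ref{mucorollary} directly from the Proposition together with the explicit eigenvalue computations that immediately precede it. The Proposition tells us that in a finite non-dihedral irreducible subgroup $\G\subset GL_2(\C)$, every element of $\G/Z$ has order $m\in\{1,2,3,4,5\}$; equivalently, for every $g\in\G$ the ratio of the two eigenvalues of $g$ is a root of unity of order at most $5$. So the first step is to verify that the hypotheses of the Proposition are met: under condition~(\ref{S}), Lemma~\ref{Sandfinite} gives that $\G=\langle A,B\rangle$ is finite, the discussion before Lemma~\ref{Sandfinite} gives irreducibility (since $\det h\neq 0$), and the assumption that we are not in the dihedral case is exactly the negation of the dihedral criterion, so $\G$ is non-dihedral. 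Hence the Proposition applies to $\G$.

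Next I would apply the Proposition to the three specific elements $A$, $B$, $C=AB$. The eigenvalues of $A$ are $x_1x_2$ and $1$, those of $B$ are $x_2x_3$ and $1$, and the computation just above the corollary shows the eigenvalues of $C=AB$ are $x_2$ and $x_1x_2x_3$. Therefore the images $A',B',C'$ in $PGL_2(\C)$ have orders equal to the orders of the roots of unity $x_1x_2=e^{2\pi i(k_1+k_2)/d}$, $x_2x_3=e^{2\pi i(k_2+k_3)/d}$, and $x_1x_2x_3/x_2=x_3x_1=e^{2\pi i(k_3+k_1)/d}$ respectively. Since these orders are orders of elements of $\G/Z$, the Proposition forces each of them to lie in $\{1,2,3,4,5\}$.

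Finally, writing $\mu_1=\frac{k_2+k_3}{d}$, $\mu_2=\frac{k_3+k_1}{d}$, $\mu_3=\frac{k_1+k_2}{d}$ (taken as fractional parts, i.e.\ reduced modulo $1$), the statement that $e^{2\pi i\mu_j}$ is a root of unity of order $u_j\le 5$ is precisely the statement that $\mu_j=\frac{t_j}{u_j}$ for some integer $0\le t_j<u_j$ with $u_j\in\{1,2,3,4,5\}$; one should note that $t_j<u_j$ because we reduce modulo $1$. This is exactly the assertion of the corollary, so the proof is complete.

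I do not anticipate a genuine obstacle here: the corollary is essentially a translation of the Proposition into the language of fractional parts, and the only thing to be careful about is bookkeeping — making sure irreducibility and non-dihedrality are correctly invoked so that the Proposition is applicable, and correctly identifying the eigenvalue ratios of $A$, $B$, $C$ with the roots of unity $e^{2\pi i\mu_j}$. The mild subtlety, if any, is that the Proposition bounds the orders of \emph{all} elements of $\G/Z$, so it suffices to exhibit $A$, $B$, $C$ as three such elements; there is no need to enumerate the whole group.
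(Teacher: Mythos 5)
Your proposal is correct and is essentially the paper's own argument: the paper likewise applies the Proposition to the images of $A$, $B$, and $C=AB$ in $PGL_2(\C)$, identifies their orders with the orders of the roots of unity $x_1x_2$, $x_2x_3$, $x_3x_1$, and concludes that the corresponding fractions have denominator at most $5$. The only difference is that you spell out the verification of the Proposition's hypotheses (finiteness from Lemma~\ref{Sandfinite}, irreducibility from $\det h\neq 0$, non-dihedrality by assumption), which the paper leaves implicit.
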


\subsection{A finite list}

Since the set $S$  in   Corollary    \ref{mucorollary} is finite, 
clearly the set of fractions $\frac{k_1}{d},\frac{k_2}{d} ,\frac{k_3}{d}$ obtained
from the set of $\mu _1, \mu _2, \mu _3$ in $S$ is also finite. 
Working up to permutation and up to the equivalence defined before, we may check that if 
$\frac{k_1}{d}, \frac{k_2}{d}, \frac{k_3}{d}$ further satisfies the condition (\ref{S}), 
then the corresponding $(\lambda,\mu,\nu)$  lie in the finite list in Table 1 
below (we discard the dihedral cases with $1 \leq p < m \leq 5$). 
This implies Theorem \ref{n=2}. \qed

\vspace{.2in}
\setlongtables
\begin{longtable}{| c | c c c | c c c | c c c | c |}
\caption[Schwarz's List]{(Non-dihedral) Schwarz's List}\\
\toprule
%\begin{tabular}{| c | c c c | c c c | c c c | c |}
%\hline
$d$  &  $\mu_1$  & $\mu_2$  & $\mu_3$ & $k_1/d$ & $k_2/d$ & $k_3/d$  &  $\lambda$  & $\mu$  & $\nu$  & Wiki-row \\[.1mm]
\midrule
\endfirsthead
\toprule
$d$  &  $\mu_1$  & $\mu_2$  & $\mu_3$ & $k_1/d$ & $k_2/d$ & $k_3/d$  &  $\lambda$  & $\mu$  & $\nu$  & Wiki-row \\[.1mm]
\midrule 
\endhead
\midrule
\endfoot
\bottomrule
\endlastfoot

12 &  2/3  & 2/3  & 1/2 &  1/4  &  1/4  & 5/12  &   1/2    &  1/3   &  1/3   &  2 \\
6  &  2/3  & 2/3  & 1/3 &  1/6  & 1/6   & 1/2   &   2/3    &  1/3   &  1/3   &  3 \\
30 &  2/3  & 2/3  & 3/5 & 3/10  & 3/10  & 11/30 &   2/5    &  1/3   &  1/3   &  7 \\
60 &  2/3  & 3/5  & 1/2 & 13/60 & 17/60 & 23/60 &   1/2    &  2/5   &  1/3   & 14 \\
30 &  2/3  & 3/5  & 2/5 & 1/6   & 7/30  & 13/30 &   3/5    &  2/5   &  1/3   & 15 \\
24 &  3/4  & 2/3  & 1/2 & 5/24  & 7/24  & 11/24 &   1/2    &  1/3   &  1/4   &  4 \\
12 &  3/4  & 3/4  & 1/3 & 1/6   & 1/6   & 7/12  &   2/3    &  1/4   &  1/4   &  5 \\
10 &  3/5  & 3/5  & 3/5 & 3/10  & 3/10  & 3/10  &   2/5    &  2/5   &  2/5   & 11 \\
60 &  4/5  & 2/3  & 1/2 & 11/60 & 19/60 & 29/60 &   1/2    &  1/3   &  1/5   &  6 \\
30 &  4/5  & 2/3  & 1/3 & 1/10  & 7/30  & 17/30 &   2/3    &  1/3   &  1/5   & 12 \\
15 &  4/5  & 2/3  & 2/5 & 2/15  & 4/15  & 8/15  &   3/5    &  1/3   &  1/5   & 10 \\
20 &  4/5  & 3/5  & 1/2 & 3/20  & 7/20  & 9/20  &   1/2    &  2/5   &  1/5   & 9  \\
30 &  4/5  & 4/5  & 1/3 & 1/6   & 1/6   & 19/30 &   2/3    &  1/5   &  1/5   & 8  \\
10 &  4/5  & 4/5  & 1/5 & 1/10  & 1/10  & 7/10  &   4/5    &  1/5   &  1/5   & 13 \\
\end{longtable}

All but the last column of Table 1 was generated using Pari-gp. The table 
is (the non-dihedral part of) Schwarz's well-known 1873 list \cite{Sch}, see 
Wikipedia: {\tt https://en.wikipedia.org/wiki/Schwarz's\_list}. 
The last column of Table 1 contains the row number of the corresponding entry 
in the Wikipedia table.  
Each of the 15 rows in that table is hit (the 1st row  being the dihedral case).

\section{The case $n \geq 3$}
\label{ngeq3}

We now use a bootstrapping argument to prove the remaining parts of 
Theorem~\ref{numbertheoretic}.  We start with the following obvious lemma.

\begin{lemma}
  \label{bootstrap}
  Let $n \geq 3$. Assume that the g.c.d. of $d, k_1, k_2, \cdots, k_{n+1}$ is equal to 1. If 
  these integers satisfy the condition (\ref{SS}), then  
  so do $d, l_1, l_2, \cdots, l_{m+1}$, for all subsets  
  $\{l_i\}$  of cardinality $m+1$ of the $\{k_j\}$, for $2 \leq m \leq n$.
\end{lemma}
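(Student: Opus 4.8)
The plan is to argue separately for each $s \in (\Z/d\Z)^*$, exploiting the single fact that the two sums occurring in condition (\ref{SS}) are sums of \emph{nonnegative} reals, so that deleting summands can only decrease them.

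First I would fix $s \in (\Z/d\Z)^*$. By the hypothesis that $d, k_1, \dots, k_{n+1}$ satisfy (\ref{SS}), at least one of the inequalities
\[
\sum_{i=1}^{n+1}\left\{\frac{k_i s}{d}\right\} < 1
\qquad\text{or}\qquad
\sum_{i=1}^{n+1}\left\{-\frac{k_i s}{d}\right\} < 1
\]
holds. Now let $\{l_1,\dots,l_{m+1}\}$, with $2\le m\le n$, be any sub-collection of $\{k_1,\dots,k_{n+1}\}$. Since each term $\{\pm l_i s/d\}$ lies in $[0,1)$, in particular is $\ge 0$, we have
\[
\sum_{i=1}^{m+1}\left\{\frac{l_i s}{d}\right\} \le \sum_{i=1}^{n+1}\left\{\frac{k_i s}{d}\right\}
\qquad\text{and}\qquad
\sum_{i=1}^{m+1}\left\{-\frac{l_i s}{d}\right\} \le \sum_{i=1}^{n+1}\left\{-\frac{k_i s}{d}\right\}.
\]
Hence whichever of the two alternatives holds for the large tuple at $s$, the corresponding alternative holds for the sub-tuple at $s$. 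Since $s$ was arbitrary, $d, l_1,\dots, l_{m+1}$ satisfy (\ref{SS}).

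Second, I would dispose of the bookkeeping. The normalization $1\le l_i\le d-1$ is inherited from the $k_j$, so the sub-tuple does sit in the framework of the Notation; the only hypothesis that might fail for it is that $\gcd(d,l_1,\dots,l_{m+1})=1$, but this plays no role in the fractional-part inequalities above. Should a common factor $e$ occur, one may replace $d$ and the $l_i$ by $d/e$ and $l_i/e$ without changing any $\{l_i s/d\}$, and since $(\Z/d\Z)^* \to (\Z/(d/e)\Z)^*$ is surjective, condition (\ref{SS}) is unaffected.

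I do not expect any real obstacle here: the entire content is the monotonicity of a finite sum of nonnegative terms under deletion of summands, which is exactly why the lemma is labelled ``obvious'' and why it can serve as the elementary driver of the bootstrapping argument in the remainder of Section~\ref{ngeq3}.
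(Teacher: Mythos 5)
Your proof is correct and is exactly the one-line monotonicity argument the paper has in mind when it labels the lemma ``obvious'' and omits the proof: for each fixed $s$, deleting nonnegative summands $\{\pm l_i s/d\}$ can only decrease whichever of the two sums is already below $1$. Your closing remark about the g.c.d.\ possibly dropping is also consistent with the paper, which notes the same point immediately after the lemma.
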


We remark that however the g.c.d. of $d, l_1, l_2, \cdots, l_{m+1}$ may no longer
necessarily be equal to 1. 
 
\subsection{$n=3$} We use the lemma to treat the case $n = 3$ using the result 
for $n = 2$ proved in Theorem~\ref{n=2}. 
A complete list of (non-dihedral) tuples $(d, l_1, l_2, l_3)$ with the g.c.d. of
$d, l_1, l_2, l_3$ equal to 1 and satisfying condition (\ref{S}) 
(let us call the corresponding triplet $(l_1, l_2, l_3)$ primitive) is easily 
generated from Table 1, and is provided in Table 2 below.

\vspace{.2in}
\setlongtables
\begin{longtable}{| c | c |}
\caption[Schwarz's List]{Primitive (non-dihedral) Schwarz triplets}\\
\toprule
%\begin{tabular}{| c | c c c | c c c | c c c | c |}
%\hline
$d$  &  $(l_1, l_2, l_3)$ with $l_1 \leq l_2 \leq l_3$  \\[.1mm]
\midrule
\endfirsthead
\toprule
$d$  &  $(l_1, l_2, l_3)$ with $l_1 \leq l_2 \leq l_3$ \\[.1mm]
\midrule 
\endhead
\midrule
\endfoot
\bottomrule
\endlastfoot

6  & {\small $(1,1,1)$, $(5,5,5)$} \\
   & {\small $(1,1,3)$, $(3,5,5)$} \\ \hline
10 & {\small $(1,1,1)$, $(3,3,3)$, $(7,7,7)$, $(9,9,9)$} \\
   & {\small $(1,3,3)$, $(3,9,9)$, $(1,1,7)$, $(7,7,9)$} \\ \hline
12 & {\small $(1,3,5)$, $(7,9,11)$ $\>$ (each with multiplicity 2)} \\
   & {\small $(1,2,7)$, $(5,10,11)$ $\>$ (each with multiplicity 2)} \\
   & {\small $(1,2,2)$, $(5,10,10)$,  $(2,2,7)$, $(10,10,11)$} \\ 
   & {\small $(1,3,3)$, $(3,3,5)$, $(7,9,9)$, $(9,9,11)$} \\ \hline
15 & {\tiny $(1,2,4)$, $(2,4,8)$, $(1,4,8)$, $(7,13,14)$, $(1,2,8)$, $(7,11,14)$, $(7, 11, 13)$, $(11,13,14)$} \\ \hline
20 & {\tiny $(1,3,7)$, $(1,3,9)$, $(1,7,9)$, $(3,7,9)$, $(11,13,17)$, $(11,13,19)$, $(11,17,19)$, $(13,17,19)$} \\ \hline
24 & {\tiny $(1,5,7)$, $(1,5,11)$, $(1,7,11)$, $(5,7,11)$, $(13,17,19)$, $(13,17,23)$, $(13,19,23)$, $(17,19,23)$} \\ \hline
30 & {\tiny $(1,5,5)$, $(5,5,7)$,  $(11,25,25)$, $(5,5,13)$, $(17,25,25)$, $(5,5,19)$, $(23,25,25)$, $(25,25,29)$} \\
   & {\tiny $(3,7,17)$, $(19,21,29)$, $(1,9,11)$, $(13,23,27)$ $\>$ (each with multiplicity 2)} \\ 
   & {\tiny $(1,9,9)$, $(3,3,7)$,  $(9,9,11)$, $(13,27,27)$, $(3,3,17)$, $(19,21,21)$, $(23,27,27)$, $(21,21,29)$} \\
   & {\tiny $(5,7,13)$, $(1,5,19)$,  $(17,23,25)$, $(11,25,29)$ $\>$ (each with multiplicity 2)} \\ \hline
60 & {\tiny $(1,11,19)$, $(7,13,17)$,  $(1,11,29)$, $(7,13,23)$, $(7,17,23)$, $(1,19,29)$, $(13,17,23)$, $(11,19,29)$, } \\
   & {\tiny $(31,41,49)$, $(37,43,47)$,  $(31,41,59)$, $(37,43,53)$, $(37,47,53)$, $(31,49,59)$, $(43,47,53)$, $(41,49,59)$} \\
\end{longtable}
 
We remark that each line in Table 2 represents one $(\Z/d\Z)^*$-orbit, and so has cardinality $\varphi(d)$, 
though for $d = 12, 30$ some triplets occur with multiplicity 2, and for $d = 60$ both lines form one orbit.  

Now suppose that $(d, k_1, k_2, k_3, k_4)$ satisfies the condition (\ref{SS}). %with $k_1 \leq k_2 \leq k_3 \leq k_4$. 
Assume that this tuple is primitive, i.e., the g.c.d. of $d,k_1,k_2,k_3,k_4$ is equal to 1.
Then, by the lemma, every sub-tuple $(d, l'_1, l'_2, l'_3)$, where the $l'_i$ are obtained by discarding
one $k_j$, also satisfies the condition (\ref{S}). Note that the g.c.d. of $d, l'_1, l'_2, l'_3$ does not have to be 1. 
By Theorem~\ref{n=2}, we see that $(d, l'_1, l'_2, l'_3)$ is a positive integral multiple of 
some $(d_1, l_1, l_2, l_3)$ occurring in Table 2 (or is dihedral), up to permutation.

Ignoring (momentarily) the tuples containing multiples of a dihedral Schwarz triplet, we see that $d$ must be 
bounded by 120. Indeed, if say,  
\begin{eqnarray}
  \label{smallerlevel}
  \begin{gathered}
    (d,k_1,k_2,k_3)  =   a \cdot (d_1, l_1, l_2, l_3) \\ 
    (d,k_1,k_2,k_4)  =   b \cdot (d_2, m_1, m_2, m_4),
  \end{gathered}
\end{eqnarray}
for some tuples $(d_1, l_1, l_2, l_3)$ and $(d_2, m_1, m_2, m_4)$ in Table 2 (up to permutation), and
for some positive integers $a$, $b$,  then by primitivity, the g.c.d. of $a,b$ has to be equal to 1, hence
$d = ad_1 = bd_2$, so $a|d_2$, so $d | d_1d_2$, so $d$ divides the l.c.m. of all $d$ occurring in Table 2, 
which is 120. \\

This reduces the problem of checking which primitive quadruples $(d, k_1, k_2, k_3, k_4)$ satisfy condition (\ref{SS}) 
(for $n = 3$) to a finite check. Table 3 lists all such primitive tuples which satisfy the 
property that every sub-tuple obtained by dropping exactly one of the $k_j$ arises from Table 2, by possibly scaling up from a 
smaller denominator. (The fact that, for instance, the g.c.d. of $a,b$ is 1 greatly reduces the number of 
smaller denominators that one has to consider.)
\vspace{.2in}
\setlongtables
\begin{longtable}{| c | c |}
\caption[Schwarz's List]{Possible (non-dihedral) Schwarz 4-tuplets}\\
\toprule
%\begin{tabular}{| c | c c c | c c c | c c c | c |}
%\hline
$d$  &  $(k_1, k_2, k_3, k_4)$ with $k_1 \leq k_2 \leq k_3 \leq k_4$  \\[.1mm]
\midrule
\endfirsthead
\toprule
$d$  &  $(k_1, k_2, k_3, k_4)$ with $k_1 \leq k_2 \leq k_3 \leq k_4$  \\[.1mm]
\midrule 
\endhead
\midrule
\endfoot
\bottomrule
\endlastfoot

6  & {\small $(1,1,1,1)$, $(5,5,5,5)$} \\
   & {\small $(1,1,1,3)$, $(3,5,5,5)$} \\ \hline
10 & {\small $(1,1,1,1)$, $(3,3,3,3)$, $(7,7,7,7)$, $(9,9,9,9)$} \\
   & {\small $(1,3,3,3)$, $(3,9,9,9)$, $(1,1,1,7)$, $(7,7,7,9)$} \\ \hline
12 & {\small $(1,2,2,7)$, $(5,10,10,11)$ $\>$ (each with multiplicity 2)} \\  
   & {\small $(1,3,3,5)$, $(7,9,9,11)$ $\>$ (each with multiplicity 2)} \\  
   & {\small $(1,2,2,2)$, $(5,10,10,10)$,  $(2,2,2,7)$, $(10,10,10,11)$} \\  \hline
15 & {\small $(1,2,4,8)$, $(7,11,13,14)$ $\>$ (each with multiplicity 4)} \\ \hline
20 & {\small $(1,3,7,9)$, $(11,13,17,19)$ $\>$ (each with multiplicity 4)} \\ \hline
24 & {\small $(1,5,7,11)$, $(13,17,19,23)$ $\>$ (each with multiplicity 4)} \\ \hline
30 & {\tiny $(1,5,5,5)$, $(5,5,5,7)$,  $(11,25,25,25)$, $(5,5,5,13)$, $(17,25,25,25)$, $(5,5,5,19)$, $(23,25,25,25)$, $(25,25,25,29)$} \\
   & {\tiny $(1,9,9,11)$, $(3,3,7,17)$, $(19,21,21,29)$, $(13,23,27,27)$ $\>$ (each with multiplicity 2)} \\ 
   & {\tiny $(1,9,9,9)$, $(3,3,3,7)$,  $(9,9,9,11)$, $(13,27,27,27)$, $(3,3,3,17)$, $(19,21,21,21)$, $(23,27,27,27)$, $(21,21,21,29)$} \\
   & {\tiny $(1,5,5,19)$, $(5,5,7,13)$,  $(11,25,25,29)$, $(17,23,25,25)$ $\>$ (each with multiplicity 2)} \\ \hline
60 & {\tiny $(1,11,19,29)$, $(7,13,17,23)$, $(31,41,49,59)$, $(37,43,47,53)$ $\>$ (each with multiplicity 4)} \\ \hline
120 & {\small No tuplets} \\
\end{longtable}

It is now straightforward to check that of these tuples, exactly two, namely $(1, 1, 1, 1)$ and $(5,5,5,5)$, both for $d = 6$, 
satisfy condition (\ref{SS}). Since these tuples are equivalent, we have proved one half of 
Theorem~\ref{numbertheoretic} (ii) (for $n = 3$).  \\

To treat the other half, we now assume that at least one of the sub-tuples of $(d, k_1, k_2, k_3, k_4)$ 
is a multiple of a dihedral triplet. By rearranging the $k_i$, we may assume that the first sub-tuple in (\ref{smallerlevel}), 
is dihedral of the form:
\begin{eqnarray*}
  (d, k_1, k_2, k_3) = a \cdot (d_1, l_1, l_2, l_3) = a \cdot (2m, p, p, m-p),
\end{eqnarray*}
for some $1 \leq p < m$, with the g.c.d. of $p,m$ equal to $1$. Clearly the second tuple in (\ref{smallerlevel})
cannot be dihedral, for if
\begin{eqnarray*}
  (d, k_1, k_2, k_4) = b \cdot (d_2, m_1, m_2, m_4) = b \cdot (2l, q, q, l-q),
\end{eqnarray*}
for $1 \leq q < l$, with the g.c.d. of $q,l$ equal to $1$, then $k_1 + k_3 = d/2 = k_1 + k_4$
so that $k_4 = k_3 = a(m-p)$. Then $k_2/d + k_3/d = (ap/2ma) + a(m-p)/2ma = 1/2$, and
similarly $k_1/d + k_4/d = 1/2$ so that condition (\ref{SS}) fails. (A similar argument applies if  
the second tuple $(d, k_1, k_2, k_4)$ equals $b \cdot (2l, q, l- q, q)$ instead.)  \\

This means  that the second tuple above is a multiple of a non-dihedral tuple  
$(d_2, m_1, m_2, m_4)$, occurring in the finite  list in Table 2, up to 
permutation.  
As before, we have $d = 2ma = bd_2$, and since the g.c.d. of $a,b$ is 1, we have $a | d_2$ (and
so is bounded) and $b | 2m$.  Moreover $ap = k_1 = bm_1$ implies that $b | p$, and since the
g.c.d. of $p,m$ is 1, we see that $b = 1,2$. However, the latter case cannot occur: if
$b = 2$ is even, then $a$ is odd and $p$ is even, so $d_2 = 2ma / b = ma$ is odd (else
$m$ is even, so 2 divides the g.c.d. of $d, k_1, k_2, k_3, k_4$, contradicting primitivity). 
But then $d_2$ must equal the only odd entry $15$ in Table 2, which is impossible, 
since $k_1 = ap = k_2$, but all triplets for $d = 15$ have distinct entries. \\ 

Thus $b = 1$ and  
$(d, k_1, k_2, k_3, k_4)$ has the shape:
\begin{eqnarray}
  \label{dihedral+finite}
  (2ma = d_2, ap=m_1, ap=m_2, a(m-p), m_4).
\end{eqnarray}
Since  $d_2$ is bounded, both $a$ and $m$ divide $d_2/2$, and $1 \leq p < m$ (with $p$ is coprime to $m$), 
clearly there are only finitely many possibilities for such tuples. Moreover, since 
$m_1 = m_2$, an inspection of Table 2 shows that 
$d_2$ can only be one of $6,10,12,30$.  \\

The following table lists all possibilities for tuples having 
shape (\ref{dihedral+finite}) above.

\vspace{.2in}
\setlongtables
\begin{longtable}{| c | c |}
\caption[Schwarz's List]{Possible tuplets of the form (\ref{dihedral+finite})}\\
\toprule
%\begin{tabular}{| c | c c c | c c c | c c c | c |}
%\hline
$d$  &  $(k_1, k_2, k_3, k_4)$   \\[.1mm]
\midrule
\endfirsthead
\toprule
$d$  &  $(k_1, k_2, k_3, k_4)$   \\[.1mm]
\midrule 
\endhead
\midrule
\endfoot
\bottomrule
\endlastfoot

6  & {\small $(1,1,2,1)$} \\
   & {\small $(1,1,2,3)$} \\ \hline
10 & {\small $(1,1,4,1)$} \\
   & {\small $(1,1,4,7)$} \\
   & {\small $(3,3,2,1)$} \\ 
   & {\small $(3,3,2,3)$} \\ \hline
12 & {\small $(2,2,4,1)$, $(2,2,4,7)$} \\ 
   & {\small $(3,3,3,1)$, $(3,3,3,5)$} \\ \hline
30 & {\small $(3,3,17,7)$, $(3,3,12,17)$,  $(9,29,6,1)$, $(9,9,6,11)$} \\
   & {\small $(5,5,10,1)$, $(5,5,10,7)$,  $(5,5,10,13)$, $(5,5,10,19)$} \\ 
\end{longtable}

A quick inspection now shows that of these possibilities only the tuple $(1,1,2,1)$ for $d = 6$
satisfies (\ref{SS}), proving the second half of Theorem~\ref{numbertheoretic} (ii). This 
completes the proof of the case $n = 3$. 

\subsection{$n=4$} This follows easily from Lemma~\ref{bootstrap} and the just established case
$n = 3$. Indeed by the lemma, the only possible $5$-tuplets of $k_i$ would occur for $d = 6$
and would be $(1,1,1,1,1)$ or $(1,1,1,1,2)$ up to equivalence. But only
the former satisfies the condition (\ref{SS}). 

\subsection{$n = 5$} The same argument shows that the only possible $6$-tuplet
of $k_i$ is $(1,1,1,1,1,1)$ for $d = 6$. But one easily checks that this tuple
fails to satisfy the condition (\ref{SS}), so there are no $6$-tuples satisfying (\ref{SS}).

\subsection{$n\geq6$} Finally, Lemma~\ref{bootstrap} shows that there are no tuplets for $n \geq 6$ satisfying
the condition (\ref{SS}) since we have just shown there is none for $n = 5$. This completes the proof of 
Theorem~\ref{numbertheoretic}. \qed.

\section{On finiteness of some monodromy}
\label{monodromy}

Consider the  space $S=\{(z_1,\cdots,  z_{n+1})\in \C ^{n+1}:  z_i\neq z_j,
 \forall  i\neq j\}$, for $n + 1 \geq 3$. Let $d\geq  2$ be an  integer; fix integers
$k_1,k_2, \cdots,  k_{n+1}$ with $1\leq  k_i \leq d-1$ such  that $d \Z + \sum
k_i\Z =\Z$. The space of solutions $(x,y)$ to the equation
\[y^d = (x-z_1)^{k_1}(x-z_2)^{k_2}\cdots  (x-z_{n+1})^{k_{n+1}} \]
is the  affine part  of a smooth  projective curve  $C=C_{d,k_i}$. Write $\mu _i=\frac{k_i}{d}$; our assumptions imply that $\mu _i \in \Q \setminus \Z$. 
Write, as in \cite{Del-Mos}, \cite{Coh-Wol},  $\mu _{\infty}=2-\sum _{i=1}^{n+1} \mu _i$; the numbers $\mu _i$ and $\mu _{\infty}$ record the ramifications  
at the $z_i$ and at $\infty$; we assume that $\mu _{\infty}$ is also not integral so that the curve $C$  is ramified at infinity as well. \\

The group $G=\Z/d\Z$  acts on the curve $C$; the  action on the affine
part is given  by $y\mapsto \omega y$ where $\omega$  is a $d$-th root
of unity. Consequently, the group $G$ operates on the first cohomology
of  the curve  $C$ with  rational  coefficients; denote  by $M_d$  the
direct sum of the cohomology over  $\C$, on which a fixed generator of
the group $G$ operate by {\it some} primitive $d$-th root of unity. \\

The  fundamental group of  the space  $S$ acts  (by monodromy)  on the
space $M_d$.  It is well known that this fundamental group is the same
as the pure braid group  $P_{n+1}$. We classify the integers $d,n$ and
the numbers  $k_i$ for which the  image of the  fundamental group (the
monodromy group) in $\mathrm{Aut}(M_d)$ is {\it finite}.  This problem
of   finiteness  has   already  been   resolved  by   several  authors
(\cite{Ssk},  \cite{Coh-Wol},   \cite{Bod},  \cite{Sch}),  since  this
monodromy is  the same as  the monodromy of  certain Appell-Lauricella
hypergeometric functions.   However, we believe  our point of  view is
different: the explicit  description of the monodromy in  terms of the
Gassner representation  makes the proofs completely  algebraic, and is
formulated in terms of the definiteness of an explicit Hermitian form.

\begin{theorem} \label{numbertheoreticmonodromy} Suppose $n,d,k_i$ are
as in the  preceding.  Then the image of  the monodromy representation
in Aut$(M_d)$  is finite  if and only  if condition  (\ref{SS}) holds.
Thus the monodromy on $M_d$ is by a finite group if and only if
\[n\leq  4.\]  Moreover,  up  to equivalence,  the  numbers  $n,d,k_i$
satisfy the conditions given below. \\

\begin{enumerate}
\item[(i)] If $n=4$, then $d=6$ and $k_i=1$, for all $i \leq 5$. \\

\item[(ii)] If $n=3$, then $d=6$ and $k_1=k_2=k_3=1$ and 
$k_4=1 \> {\rm or} \> 2$. \\

\item[(iii)] 
If  $n=2$, then $d=2m$ and $k_1=k_2=p$  and $k_3=m-p$, or
else $d,k_i$ lie in a finite list, with $d\leq 60$.
\end{enumerate}

\end{theorem}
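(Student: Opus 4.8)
\textbf{Proof proposal for Theorem~\ref{numbertheoreticmonodromy}.}
The plan is to reduce the statement entirely to Theorem~\ref{numbertheoretic} together with an identification of the monodromy representation on $M_d$ with (a summand of) the Gassner representation specialized at a $d$-th root of unity. First I would recall, following \cite{Del-Mos}, that the eigenspace $M_d$ of $H^1(C,\C)$ on which a fixed generator of $G=\Z/d\Z$ acts by a fixed primitive $d$-th root of unity carries a natural Hermitian form (coming from the cup product together with the complex conjugation on coefficients), and that the pure braid group $P_{n+1}$ acts on $M_d$ preserving this form. The key point is that this action is exactly the specialization at $t_i \mapsto e^{2\pi i k_i/d}$ of the (reduced) Gassner representation of $P_{n+1}$, so that the monodromy group is, up to finite index issues, the group $\Gamma$ studied in Sections~\ref{Sectionn=2}--\ref{skewhermitian}. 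This identification is classical (it is implicit in \cite{Del-Mos}; see also the discussion around \cite{V}); I would cite it rather than reprove it, and note that the explicit generators $A,B$ of Section~\ref{Sectionn=2} in the case $n=2$ are precisely the images of the standard generators of $P_3$.

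Next I would invoke the equivalence, asserted in the introduction and proved in Section~\ref{skewhermitian} (Lemma~\ref{definite} and the surrounding discussion), that the image of the Gassner representation at $d$-th roots of unity is finite if and only if the associated skew-Hermitian form $h$ (in $n$ variables over $E/F$) is totally anisotropic, and that total anisotropy of $h$ is in turn equivalent to condition (\ref{SS}) for the tuple $(d,k_1,\dots,k_{n+1})$. Concretely: finiteness of monodromy on $M_d$ $\iff$ finiteness of the image of the specialized Gassner representation $\iff$ $h$ totally anisotropic $\iff$ condition (\ref{SS}) holds. Here one should be slightly careful about the difference between the monodromy group and the literal image of $\Gamma$: since the cup-product form is preserved, the monodromy group lies in a unitary group $U(h)$, and over the reals $U(h)$ is compact precisely when $h$ is (totally) definite; an infinite subgroup of a compact group is fine, so what is actually needed is that the monodromy image is \emph{discrete} in $U(h)$ when $h$ is indefinite at some archimedean place — this is the content of the cited finiteness criterion (the integrality of the generators $A,B$ over ${\mathcal O}_E$ makes $\Gamma$ a subgroup of an arithmetic group, so it is discrete, and a discrete subgroup of a non-compact semisimple group which is Zariski-dense cannot be finite). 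I would spell this dichotomy out in one paragraph, leaning on Lemma~\ref{Sandfinite} for $n=2$ and its generalization Lemma~\ref{definite} for general $n$.

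Finally, having established the biconditional ``monodromy finite $\iff$ (\ref{SS})'', the numerical conclusions (i), (ii), (iii) are immediate translations of the corresponding clauses of Theorem~\ref{numbertheoretic}: for $n=4$ the unique tuple up to equivalence has all $\mu_i = 1/6$, i.e. $d=6$, $k_i=1$; for $n=3$ the two tuples are $(\mu_i)=(1/6,1/6,1/6,1/6)$ and $(1/6,1/6,1/6,2/6)$, i.e. $d=6$ with $(k_i)=(1,1,1,1)$ or $(1,1,1,2)$; and for $n=2$ one gets either the dihedral family $d=2m$, $k_1=k_2=p$, $k_3=m-p$, or one of the finitely many Schwarz tuples of Table~1 with $d\le 60$. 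One small bookkeeping point: Theorem~\ref{numbertheoretic} is phrased in terms of the $\mu_i = k_i/d$ and ``equivalence'' (scaling by a unit and permuting), whereas the monodromy statement wants specific $d, k_i$; I would remark that the equivalence relation used in Theorem~\ref{numbertheoretic} corresponds exactly to changing the chosen primitive $d$-th root of unity and relabeling the points $z_i$, so it does not affect finiteness of the monodromy, and the representative tuples listed are the stated ones.

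The main obstacle I expect is \emph{not} the number theory — that is entirely done in Theorem~\ref{numbertheoretic} — but rather pinning down precisely the identification of the braid monodromy on $M_d$ with the specialized Gassner representation, including the compatibility of the cup-product form with the skew-Hermitian form $h$ (up to a harmless scalar and the choice of embedding $E\hookrightarrow\C$), and the care needed at archimedean places where $h$ is definite: there the relevant unitary group is compact, so finiteness of the monodromy image is genuinely a statement about the image being \emph{finite}, which one deduces from $\Gamma$ being a \emph{discrete} (arithmetic) subgroup of a group that is compact at every infinite place, hence finite. Making that last implication airtight — discrete $+$ compact-at-all-places $\Rightarrow$ finite — is routine once one knows $\Gamma \subset GL_n({\mathcal O}_E)$ preserves a totally definite form, but it is the step that deserves an explicit sentence rather than being swept into ``by \cite{Coh-Wol}''.
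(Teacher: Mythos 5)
Your proposal follows the paper's proof essentially verbatim: identify the monodromy on $M_d$ with the specialized Gassner representation $\rho_d$ (the paper does this via irreducibility from Proposition 19 of \cite{V} together with the fact that $M_d$ is a quotient of $\rho_d$), reduce finiteness of the image to total anisotropy of the skew-Hermitian form $h$ via Lemma~\ref{definite} and Lemma~\ref{finite}, pass from condition (\ref{*}) to condition (\ref{SS}), and read off (i)--(iii) from Theorem~\ref{numbertheoretic}. The only place your sketch of the mechanism drifts from the paper is the ``finite $\Rightarrow$ totally anisotropic'' direction, which Lemma~\ref{definite} establishes by averaging a positive definite Hermitian form and applying Schur's lemma to the irreducible action (not by a Zariski-density/discreteness argument); since you defer to that lemma anyway, this does not affect correctness.
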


\begin{remark} Again,  note that the condition (\ref{SS})  is a purely
number theoretic condition; the proof of the theorem, however, depends
on an analysis of certain finite subgroups of unitary groups generated
by reflections.
\end{remark}

We   will  prove  Theorem~\ref{numbertheoreticmonodromy}   after  some
preliminaries  on  Hermitian forms  and  unitary  groups generated  by
reflections.

\section{Skew-Hermitian Forms}
\label{skewhermitian}

\begin{lemma} 
\label{anisotropic}
Suppose  $h$ is a  skew-Hermitian form on  $\C^n$.  Then
$h$ is anisotropic if and only  if the principal minors $u_i$ have the
property: $\frac{u_{j+1}u_{j-1}}{u_j^2}$ is positive, for all $j\geq 1$
(by convention $u_0=1$).
\end{lemma}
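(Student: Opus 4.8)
The plan is to prove Lemma~\ref{anisotropic} by performing the Gram--Schmidt reduction (i.e.\ a sequence of congruence transformations) to bring $h$ to diagonal form, and then read off the anisotropy condition from the signs of the diagonal entries together with the relation between those entries and the principal minors.

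\textbf{Step 1: Diagonalization.} Let $H$ be the matrix of $h$ in a fixed basis $e_1,\dots,e_n$, so that $\,^t H = -\overline H$. Writing $h=iH'$ with $H'$ Hermitian, or arguing directly, one can apply the usual Gram--Schmidt/Jacobi procedure: inductively replace $e_{j+1}$ by $e_{j+1}$ minus a suitable $\C$-linear combination of $e_1,\dots,e_j$ so as to make the new vector $h$-orthogonal to $e_1,\dots,e_j$. This succeeds as long as the leading principal minors $u_1,\dots,u_{n-1}$ are all nonzero; I would first dispose of the degenerate case where some $u_j=0$ (there $h$ is isotropic, as a nonzero vector in the radical of the restriction to $\langle e_1,\dots,e_j\rangle$ can be produced, and simultaneously $\frac{u_{j+1}u_{j-1}}{u_j^2}$ is not a positive real number, so the claimed equivalence is vacuously consistent). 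Assuming all $u_j\neq 0$, this process yields a new basis in which $h$ is diagonal with entries $i\lambda_1,\dots,i\lambda_n$, $\lambda_j\in\R\setminus\{0\}$, and — crucially — the leading principal minors are unchanged by these unipotent congruences, so $u_j = (i\lambda_1)(i\lambda_2)\cdots(i\lambda_j)$.

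\textbf{Step 2: Anisotropy of a diagonal skew-Hermitian form.} For the diagonal form $\mathrm{diag}(i\lambda_1,\dots,i\lambda_n)$, the value on $x=(x_1,\dots,x_n)$ is $h(x,x)=\sum_j i\lambda_j|x_j|^2$. This vanishes for some $x\neq0$ exactly when the nonzero reals $\lambda_j$ do not all have the same sign; equivalently, $h$ is anisotropic iff all the $\lambda_j$ have a common sign, iff every ratio $\lambda_{j+1}/\lambda_j$ is positive (this is exactly the criterion already recorded in the Notation paragraph preceding Lemma~\ref{Sandhermitian}).

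\textbf{Step 3: Translating to principal minors.} From $u_j=i^j\lambda_1\cdots\lambda_j$ (with $u_0=1$) one computes
\[
\frac{u_{j+1}u_{j-1}}{u_j^2} = \frac{(i^{j+1}\lambda_1\cdots\lambda_{j+1})(i^{j-1}\lambda_1\cdots\lambda_{j-1})}{(i^j\lambda_1\cdots\lambda_j)^2} = \frac{i^{2j}\,\lambda_{j+1}\,(\lambda_1\cdots\lambda_{j-1})^2\lambda_j}{i^{2j}(\lambda_1\cdots\lambda_{j-1})^2\lambda_j^2} = \frac{\lambda_{j+1}}{\lambda_j}.
\]
Hence $\frac{u_{j+1}u_{j-1}}{u_j^2}>0$ for all $j\ge 1$ if and only if all successive ratios $\lambda_{j+1}/\lambda_j$ are positive, which by Step 2 is exactly anisotropy of $h$. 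Combining the three steps proves the lemma.

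The main obstacle I anticipate is not any single computation but making the Gram--Schmidt step clean in the skew-Hermitian (rather than Hermitian) setting and handling the possibility that an intermediate principal minor vanishes; once one is careful that the unipotent congruences genuinely fix the leading principal minors, the rest is the short algebraic identity above. One should also double-check the edge indices ($j=1$ uses $u_0=1$, and the statement ``for all $j\ge 1$'' should be read as $1\le j\le n-1$ since $u_{j+1}$ must be defined), but this is bookkeeping rather than a genuine difficulty.
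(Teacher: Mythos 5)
Your proposal is correct and follows essentially the same route as the paper: Gram--Schmidt diagonalization by a unipotent upper-triangular congruence (which preserves the leading principal minors), anisotropy of the diagonal form read off from the positivity of successive ratios of the diagonal entries, and the identity $\frac{u_{j+1}u_{j-1}}{u_j^2}=\lambda_{j+1}/\lambda_j$. Your extra care with the degenerate case $u_j=0$ and with the index range is a minor refinement of the paper's argument, not a different approach.
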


\begin{proof}  Suppose  that $h$  does  not  represent  a zero;  hence
$a_{11}\neq 0$, where  $(a_{ij})$ is the matrix of  $h$ in the standard
basis. By  the Gram-Schmidt process, there exists  an upper triangular
unipotent matrix $u\in  GL_n(\C)$ such that $^t(\overline{u})hu=h'$ is
diagonal, with diagonal entries  $\lambda _1, \cdots, \lambda _n$ say.
Now  $h$  is  anisotropic  if  and  only if  the  equivalent  $h'$  is
anisotropic. The latter  is anisotropic if and only  if the successive
ratios $\beta _j=\lambda _{j+1}/\lambda _j$ are all positive. \\

Since $u$  is a unipotent  upper triangular  matrix, the
principal   minors  of  $h$   and  $h'$   are  the   same.  Therefore,
$u_{j+1}=\lambda   _1\cdots   \lambda   _{j+1}$.   Consequently,   
$h'$  is anisotropic  if and only if  for all $j$, $$\beta
_j =  (u_{j+1}/u_j)/ (u_j/u_{j-1})$$ is positive.  
This is  equivalent to
$\beta _j= \frac{u_{j+1}u_{j-1}}{u_j^2}$  being positive,  for  all
$j$. Hence the lemma.
\end{proof}

Now suppose  that $E/F$ is a  CM extension of number fields 
and that $h$ is a skew-Hermitian form on $E^n$. Suppose that $h$
does not represent a  zero. Then the Gram-Schmidt process diagonalises
$h$.  Suppose the diagonal  entries are  $\lambda _1,  \cdots, \lambda_n$. 
Then we have
  \[\lambda _1\cdots \lambda _j=\det (h_j),\]
where $\det(h_j)$ is the principal $j\times j$ minor of $h$. Hence 
\[\lambda _j= \det (h_j)/\det (h_{j-1}).\] Write 
\[\b _j= \frac{\det (h_{j+1}) \det(h_{j-1})}{\det(h_j)^2}.\]
>From the previous lemma we obtain:

\begin{lemma}  \label{betadef} Suppose  $F\ra \R$  is an  embedding and
$E\otimes  _F\R=\C$.  Then the  skew-Hermitian  form  $h$ is anisotropic  in  
this embedding if and only if

\[\b _j >0, \quad \forall j. \]
\end{lemma}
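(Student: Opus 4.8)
The plan is to deduce the statement immediately from Lemma~\ref{anisotropic}. Fix the embedding $F\ra\R$; since $E\otimes_F\R=\C$, the skew-Hermitian form $h$ on $E^n$ base-changes to a skew-Hermitian form on $\C^n$, to which Lemma~\ref{anisotropic} applies directly. That lemma asserts that $h$ is anisotropic over $\C$ if and only if the successive ratios $\frac{u_{j+1}u_{j-1}}{u_j^2}$ of its principal minors $u_j$ are all positive, with the convention $u_0=1$.

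Next I would identify these ratios with the $\b_j$, which is exactly the content of the computation preceding the statement: running Gram--Schmidt diagonalises $h$ with diagonal entries $\l_1,\dots,\l_n$, and because the conjugating matrix is unipotent upper triangular the principal $j\times j$ minor of $h$ is unchanged, so it equals $\l_1\cdots\l_j=\det(h_j)$. Hence $u_j=\det(h_j)$ and
\[\frac{u_{j+1}u_{j-1}}{u_j^2}=\frac{\det(h_{j+1})\det(h_{j-1})}{\det(h_j)^2}=\b_j.\]
Combining the two steps yields the desired equivalence: $h$ is anisotropic in this embedding if and only if $\b_j>0$ for every $j$.

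The only point that needs a word of care — and it is not really an obstacle — is the degenerate case: the quantity $\b_j$ is defined only when $\det(h_j)\neq 0$, and Gram--Schmidt diagonalises $h$ only when no leading principal minor vanishes. But if $h$ is anisotropic then $\det(h_j)\neq 0$ for all $j$, since a vanishing leading minor would force the restriction of $h$ to the span of the first $j$ standard basis vectors to be degenerate, and any nonzero vector in its radical is isotropic; conversely, if all $\b_j>0$ then inductively all $\det(h_j)\neq 0$ and the Gram--Schmidt argument goes through. So the proof is essentially bookkeeping of principal minors layered on top of Lemma~\ref{anisotropic}, and I expect no genuine difficulty.
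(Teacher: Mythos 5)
Your proposal is correct and follows essentially the same route as the paper: the paper also obtains Lemma~\ref{betadef} immediately from Lemma~\ref{anisotropic} by noting that the unipotent Gram--Schmidt change of basis preserves the principal minors, so that $u_j=\det(h_j)$ and the ratios $\frac{u_{j+1}u_{j-1}}{u_j^2}$ are exactly the $\b_j$. Your extra remark on the nonvanishing of the leading minors is a sound (and slightly more careful) handling of a point the paper disposes of by simply assuming at the outset that $h$ does not represent a zero.
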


\begin{lemma} \label{definite} Suppose $h$ is a skew-Hermitian form in
$n$   variables  over   a  CM   field  $E/F$,   and   $\Gamma  \subset
U(h)({\mathcal   O}_F)$   a  subgroup   which   acts  irreducibly   on
$\C^n$.  Then $\Gamma  $  is finite  if  and only  if  $h$ is  totally
anisotropic.
\end{lemma}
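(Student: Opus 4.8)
The plan is to prove both implications using the structure of the unitary group $U(h)$ over the various archimedean completions of $F$.

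\medskip

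\textbf{The ``if'' direction.} Suppose $h$ is totally anisotropic. The key point is that $U(h)$ is then an anisotropic $F$-group whose group of real points $U(h)(F\otimes_\Q\R) = \prod_v U(h)(F_v)$ is compact: at each archimedean place $v$ of $F$, anisotropy over $\C/\R$ means $h = \pm iH_v$ with $H_v$ a definite Hermitian form (by the Notation paragraph), so $U(h)(F_v)$ is a compact unitary group $U(n)$. Thus $\prod_v U(h)(F_v)$ is compact. Since $\Gamma \subset U(h)(\mathcal O_F)$ is a discrete subgroup of this compact group (discreteness because $\mathcal O_F$ embeds discretely into $F\otimes_\Q\R = \prod_v F_v$, so $GL_n(\mathcal O_F)$ is discrete in $\prod_v GL_n(F_v)$), it follows that $\Gamma$ is finite. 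Note that irreducibility is not needed for this direction.

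\medskip

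\textbf{The ``only if'' direction.} Suppose $\Gamma$ is finite; we must show $h$ is totally anisotropic, i.e.\ anisotropic at every archimedean place $v$ of $F$. Fix such a $v$ and pass to the completion, so we may regard $\Gamma \subset U(h)(\C) \cong U(p,q)$ for some signature $(p,q)$ with $p+q=n$ of the associated Hermitian form $H = \mp i h$. Since $\Gamma$ is finite, it preserves a positive-definite Hermitian inner product on $\C^n$ (average any one over $\Gamma$). So $\Gamma$ lies simultaneously in $U(p,q)$ and in the compact $U(n)$ for that averaged form; equivalently, after conjugation $\Gamma$ is a subgroup of $U(p,q)\cap U(n)\cong U(p)\times U(q)$, which preserves the decomposition $\C^n = \C^p \oplus \C^q$ into the positive and negative eigenspaces of $H$. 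If both $p$ and $q$ are nonzero this is a proper nontrivial $\Gamma$-invariant decomposition, contradicting the irreducibility of $\Gamma$ on $\C^n$. Hence $q = 0$ or $p = 0$, i.e.\ $H$ is definite, i.e.\ $h = \pm i H$ is anisotropic at $v$. As $v$ was arbitrary, $h$ is totally anisotropic.

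\medskip

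The main obstacle is making the ``if'' direction's compactness/discreteness argument fully rigorous: one must check that $U(h)$ really is $F$-anisotropic as an algebraic group (so that $U(h)(\mathcal O_F)$ is commensurable with a lattice in the \emph{compact} $\prod_v U(h)(F_v)$ with no contribution from $p$-adic or infinite-volume factors), or, more elementarily, that a discrete subgroup of $\prod_v U(n)$ contained in $GL_n(\mathcal O_F)$ is automatically finite — which follows because $GL_n(\mathcal O_F)$ is discrete in $\prod_v GL_n(F_v)$ and its intersection with the compact set $\prod_v U(h)(F_v)$ is therefore finite. For the ``only if'' direction the only subtlety is the standard averaging argument producing a $\Gamma$-invariant definite form, together with the observation that commuting (semisimple) groups are simultaneously diagonalizable so that $U(p,q)\cap U(n)$ decomposes $\C^n$ as claimed; irreducibility then does the rest.
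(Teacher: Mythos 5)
Your proof is correct and follows essentially the same route as the paper: for the ``if'' direction, total anisotropy makes $\prod_v U(h)(F_v)$ compact and $U(h)(\mathcal O_F)$ is discrete in it, hence finite; for the ``only if'' direction, one averages a definite Hermitian form over the finite group and uses irreducibility. The only (cosmetic) difference is in how irreducibility is exploited: the paper applies Schur's lemma to conclude the averaged invariant form is a scalar multiple of $h$ at each archimedean place, whereas you derive a $\Gamma$-invariant splitting of $\C^n$ into the positive and negative parts of $h$ relative to the averaged form and rule it out by irreducibility --- two faces of the same argument.
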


\begin{proof}  Suppose  $\Gamma$ is  finite.  Fix  {\it any}  positive
definite  Hermitian  form $H$  on  $\C^n$.  Being  a sum  of  positive
definite forms,  the average $H'(x,y)=\sum _{\g \in  \G} H(\g x,\g y)$ is
also positive  definite and is $\Gamma$-invariant. Hence  $iH'$ is a
$\G$-invariant anisotropic skew-Hermitian form on $\C^n$. 
The  irreducibility of  the action of  $\Gamma$ implies, by Schur's lemma,   
that the invariant anisotropic skew-Hermitian form $iH'$ is  a 
scalar multiple of the form $h$, for any embedding  of $F$ into $\R$.  
Hence $h$  is anisotropic over all embeddings of the field $F$. \\

Conversely, if $h$ is anisotropic at  all real places $v$ of $F$, then
$ih$ is  definite, for all  $v$, and hence the  group $U(h)(F_v)\simeq
U(ih)(F_v)$ is compact, for  all $v$.  Since $U(h)({\mathcal O}_F)$ is
a  discrete subgroup  of $U(h)(F\otimes  _{\Q} \R)$,  it  follows that
$U(h)({\mathcal O}_F)$ is finite, hence $\Gamma$ is also finite.
\end{proof}

\begin{remark}
Let $G = U(h)$. A corollary of the proof is that if a finite subgroup of 
$G({\mathcal O}_F)$ acts irreducibly on $\C^n$, then $G({\mathcal O}_F)$ 
is finite.
\end{remark}

\begin{notation}   Denote   by  $R$   the   Laurent  polynomial   ring
$\Z[X_1^{\pm  1}, \cdots,  X_{n+1}^{\pm 1}]$  in $n+1$  variables  with 
$\Z$-coefficients.  
The  map $X_i\mapsto X_i^{-1}$,  for all $i$,  induces an
involution of order two on the  ring $R$. Denote by $R^n$ the standard
free $R$  module of rank $n$  with standard basis $\e  _i$, for $ 1\leq
i\leq n$.  Define the skew-Hermitian form $h=(h_{ij})_{1\leq i,j\leq
n}$ by the formulae $h(\e_i,\e_j)=0 $ if $\mid i-j \mid \geq 2$ and
\[h(\e_i,\e_i)=\frac{1-X_iX_{i+1}}{(1-X_i)(1-X_{i+1})},             \quad
h(\e_i,\e_{i+1})= - \frac{1}{1-X_{i+1}}.\] 

\end{notation}

\begin{lemma} \label{determinant}  The form  $h$ does not  represent a
zero in $R^n$. Moreover, for each $j$, the principal $j\times j$ minor
is given by
%\[\det (h_j)
\[u_j = \frac{1-X_1\cdots X_{j+1}}{(1-X_1)\cdots (1-X_{j+1})}.\]
\end{lemma}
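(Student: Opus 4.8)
The plan is to compute the principal minors $u_j = \det(h_j)$ directly by induction on $j$, and then deduce non-degeneracy (hence that $h$ does not represent zero in $R^n$) from the fact that the top minor $u_n$ is a unit times $1 - X_1\cdots X_{n+1}$, which is a nonzero element of the domain $R$. First I would observe that, because $h_{ij} = 0$ whenever $|i-j|\geq 2$, the matrix $h_j$ is tridiagonal, so its determinant satisfies the standard three-term recursion
\[
u_j = h(\e_j,\e_j)\, u_{j-1} \;-\; h(\e_{j-1},\e_j)\,h(\e_j,\e_{j-1})\, u_{j-2},
\]
with $u_0 = 1$ and $u_1 = h(\e_1,\e_1) = \frac{1-X_1X_2}{(1-X_1)(1-X_2)}$, which already matches the claimed formula. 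Here $h(\e_j,\e_{j-1}) = \overline{-h(\e_{j-1},\e_j)} = \overline{-\bigl(-\tfrac{1}{1-X_j}\bigr)}$, and since the bar involution sends $X_j \mapsto X_j^{-1}$ this is $\frac{1}{1-X_j^{-1}} = \frac{-X_j}{1-X_j}$; hence the product $h(\e_{j-1},\e_j)h(\e_j,\e_{j-1}) = \bigl(-\tfrac{1}{1-X_j}\bigr)\bigl(\tfrac{-X_j}{1-X_j}\bigr) = \frac{X_j}{(1-X_j)^2}$.

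Next I would feed the proposed closed form $u_j = \frac{1 - X_1\cdots X_{j+1}}{(1-X_1)\cdots(1-X_{j+1})}$ into the recursion and verify it. Substituting the inductive hypotheses for $u_{j-1}$ and $u_{j-2}$, the right-hand side becomes
\[
\frac{1-X_jX_{j+1}}{(1-X_j)(1-X_{j+1})}\cdot\frac{1-X_1\cdots X_j}{(1-X_1)\cdots(1-X_j)}
\;-\;\frac{X_j}{(1-X_j)^2}\cdot\frac{1-X_1\cdots X_{j-1}}{(1-X_1)\cdots(1-X_{j-1})}.
\]
Clearing the common denominator $(1-X_1)\cdots(1-X_{j-1})(1-X_j)(1-X_{j+1})$ — being slightly careful that the second term is missing a factor $(1-X_j)(1-X_{j+1})$ in its denominator relative to the first, so one multiplies the numerator of the second term by nothing extra but the first by the appropriate factor — reduces the identity to the polynomial identity
\[
(1-X_jX_{j+1})(1-X_1\cdots X_j) - X_j(1-X_1\cdots X_{j-1}) = 1 - X_1\cdots X_{j+1},
\]
which one checks by expanding: the left side is $1 - X_jX_{j+1} - X_1\cdots X_j + X_1\cdots X_{j+1}X_j \cdot X_{j+1}^{-1}\cdots$ — more simply, $1 - X_1\cdots X_j - X_jX_{j+1} + X_1\cdots X_{j-1}X_j^2X_{j+1} - X_j + X_1\cdots X_j$, and the terms $-X_1\cdots X_j$ and $+X_1\cdots X_j$ cancel while $+X_1\cdots X_{j-1}X_j^2X_{j+1} - X_j$ combine with $-X_jX_{j+1}$; I would organize this cancellation carefully (it is a routine but slightly fiddly polynomial manipulation) to land exactly on $1 - X_1\cdots X_{j+1}$.

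Finally, since $u_n = \frac{1 - X_1\cdots X_{n+1}}{(1-X_1)\cdots(1-X_{n+1})} \neq 0$ in the fraction field of $R$, the matrix $h$ is invertible over that field, so $h(x,x) = 0$ together with $h$ being non-degenerate does not immediately give $x = 0$ — rather, I would argue that if $h$ represented a zero, some leading principal minor would vanish (the Gram–Schmidt / Jacobi criterion, as in the proof of Lemma \ref{Sandhermitian}), but all the $u_j$ are manifestly nonzero, being units times $1 - X_1\cdots X_{j+1} \neq 0$; alternatively, non-degeneracy of $h$ over the fraction field suffices to run the Gram–Schmidt diagonalization, and then anisotropy over $R$ follows because $R$ is a domain and the diagonal entries $\lambda_j = u_j/u_{j-1}$ are nonzero. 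The main obstacle is purely bookkeeping: keeping track of the bar involution $X_j \mapsto X_j^{-1}$ when computing $h(\e_j,\e_{j-1})$ from $h(\e_{j-1},\e_j)$, and then carrying out the polynomial cancellation in the inductive step without sign or indexing errors; there is no conceptual difficulty beyond the tridiagonal determinant recursion.
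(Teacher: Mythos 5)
Your inductive computation of the principal minors via the tridiagonal three-term recursion is a sound and self-contained alternative to the paper, which simply cites \cite{V} for the determinant formula and observes that the $j\times j$ case follows by taking $n=j$. (One bookkeeping slip: after clearing denominators the identity you need is
\[
(1-X_jX_{j+1})(1-X_1\cdots X_j) - X_j(1-X_{j+1})(1-X_1\cdots X_{j-1}) = (1-X_j)(1 - X_1\cdots X_{j+1}),
\]
not the one you wrote down, which already fails at $j=1$; the extra factors $(1-X_{j+1})$ and $(1-X_j)$ come from the fact that the first term's denominator contains $(1-X_j)^2$. With that correction the induction goes through.)

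The genuine gap is in the second assertion. You argue that anisotropy follows because all the leading principal minors $u_j$ are nonzero (or, alternatively, because $R$ is a domain and the Gram--Schmidt diagonal entries $\lambda_j=u_j/u_{j-1}$ are nonzero). Neither implication is valid: the diagonal skew-Hermitian form with entries $i$ and $-i$ over $\C/\R$ has nonzero principal minors $i$ and $1$, yet $(1,1)$ is isotropic; and after diagonalization $h(x,x)=\sum_j \lambda_j y_j\overline{y_j}$ is a sum of several nonzero elements of a domain, which can perfectly well vanish. Lemma \ref{anisotropic} requires the ratios $\beta_j=u_{j+1}u_{j-1}/u_j^2$ to be \emph{positive}, not merely nonzero, and positivity only makes sense after embedding into $\C$. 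This is exactly what the paper does: it specializes $X_j\mapsto e^{2\pi i\theta_j}$ with the $\theta_j$ small, positive, and (algebraically independent) transcendental, so that the involution $X_j\mapsto X_j^{-1}$ becomes complex conjugation and the specialization is injective on $R$; then the sine formula (\ref{sin}) shows each $\beta_j$ is a ratio of products of small positive sines, hence positive, so the specialized form is anisotropic over $\C$ by Lemma \ref{anisotropic}, and injectivity pulls this back to $R^n$. Your proof needs this archimedean positivity argument (or some substitute for it); as written, the anisotropy claim does not follow from what you have established.
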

 \begin{proof}

In \cite{V}, the determinant of $h$ was computed to be 
\[\frac{1-X_1X_2\cdots  X_{n+1}}{(1-X_1)\cdots  (1-X_{n+1})}.\] Taking
$n=j$,  we get  the  formula for  the  determinant of  the $j\times  j$
principal  minor. Take $X_j=e^{2\pi  i \th  _j}$ to  be transcendental
with $\th_j \in  \R$  positive and close  to  $0$.  Put  $\Sigma  _{j-1}=
\th_1+\cdots + \th _{j}$  (the sum of the first $j$  terms).  
Using the equality
\begin{eqnarray}
  \label{sin}
  1-e^{2\pi i \theta}= e^{\pi i \theta} ((-2i) \sin (\pi \theta)),
\end{eqnarray}
and that $\sin(\th)$ is close to $\th$, for $\th$ small (and positive), 
we see that the numbers \[\b  _j % = \frac{\det(h_{j+1}) \det(h_{j-1})}{\det (h_j)^2}  
                               =  \frac{u_{j+1} u_{j-1}}{u_j^2} = \frac{\sin (\pi
\Sigma _{j+1}) \sin  (\pi \Sigma _{j-1}) \sin (\pi \th  _{j+1})} {\sin (\pi
\Sigma  _j)^2   \sin  (\pi  \th   _{j+2})}\]  are positive, for all $j$.   By  Lemma
\ref{anisotropic} it follows that $h$ is anisotropic.
\end{proof}

The map  $X_i\mapsto t_i= e^{2\pi \sqrt{-1}  \frac{k_is}{d}}$ maps $R$
onto the  ring ${\mathcal O}_E$  of integers in the  $d$-th cyclotomic
extension $E=\Q(e^{2\pi i /d})$.  Let $F=\Q(\cos(\frac{2\pi }{d}))$ be
the  maximal totally  real  subfield of  $E$.   We then  get a  
skew-Hermitian form on $E^n$ induced from $h$.  \\

Define, for each $j\leq n-1$, the numbers 
\[\nu   _j=  \nu   _j(s)  =   \left\{\sum   _{i=1}^{j}  \frac{k_is}{d}
\right\}.\]  Denote  by  (\ref{*})  the conditions  satisfied  by  the
numbers $n,d,k_i$:
\begin{equation}
\label{*} 
  \begin{gathered}  \e_j=  \e_j(s)\stackrel{def}{=}  (-1)^{[\nu_j(s)+\mu
_{j+1}(s)+\mu   _{j+2}(s)]}=1,  \\   \forall  s\in   (\Z/d\Z)^*  \quad
\text{and} \quad \forall j \quad with \quad 1\leq j\leq n-1.
  \end{gathered}
\end{equation}

\begin{lemma}      \label{finite}      The     group      $G({\mathcal
O}_F)=U(h)({\mathcal  O}_F)$  is finite  if  and  only if  $(n,d,k_i)$
satisfy condition (\ref{*}).
\end{lemma}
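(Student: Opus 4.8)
The plan is to reduce the finiteness of $G(\mathcal{O}_F)$ to the total anisotropy of $h$, and then to convert total anisotropy into condition~(\ref{*}) by a sign computation on the principal minors. First I would invoke Lemma~\ref{definite}: since $U(h)$ contains an irreducible arithmetic subgroup (the monodromy group generated by the complex reflections of \cite{V}, which preserves $h$ and acts irreducibly on $\C^n$), the group $G(\mathcal{O}_F)=U(h)(\mathcal{O}_F)$ is finite if and only if $h$ is totally anisotropic; that is, if and only if for every $s\in(\Z/d\Z)^*$ the skew-Hermitian form $h^{(s)}$ on $\C^n$ obtained from $h$ by $X_i\mapsto e^{2\pi ik_is/d}$ (the embedding of $E$ in $\C$ attached to the Galois element $(s)$) is anisotropic. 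By Lemma~\ref{anisotropic}, $h^{(s)}$ is anisotropic exactly when every principal minor $u_j(s)$ is nonzero and every successive ratio $\b_j(s)=u_{j+1}(s)u_{j-1}(s)/u_j(s)^2$ is positive for $1\le j\le n-1$; thus it suffices to show that this holds for all $s$ if and only if (\ref{*}) holds.

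Next I would run the sign computation. By Lemma~\ref{determinant}, under $X_i\mapsto e^{2\pi ik_is/d}$ one has $u_j(s)=(1-e^{2\pi i\Sigma_js/d})/\prod_{i=1}^{j+1}(1-e^{2\pi ik_is/d})$, where $\Sigma_r=k_1+\cdots+k_{r+1}$; the denominator never vanishes since $1\le k_i\le d-1$ and $s$ is a unit, so $u_j(s)=0$ precisely when $\Sigma_j\equiv0\pmod d$, a condition independent of $s$. Assume provisionally that no such degeneration occurs. Substituting the $X_i$ into the computation of $\b_j$ carried out in the proof of Lemma~\ref{determinant} (with the transcendental $\th_i$ there replaced by $k_is/d$) and using the factorisation (\ref{sin}), one obtains
\[
\b_j(s)=\frac{\sin(\pi\Sigma_{j+1}s/d)\,\sin(\pi\Sigma_{j-1}s/d)\,\sin(\pi k_{j+1}s/d)}{\sin(\pi\Sigma_{j}s/d)^2\,\sin(\pi k_{j+2}s/d)},
\]
in which, by the provisional assumption and $1\le k_i\le d-1$, every sine is nonzero. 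Since $\mathrm{sign}\,\sin(\pi x)=(-1)^{[x]}$ for $x\notin\Z$, the sign of $\b_j(s)$ is $(-1)^{[\Sigma_{j+1}s/d]+[\Sigma_{j-1}s/d]+[k_{j+1}s/d]+[k_{j+2}s/d]}$. Inserting $\Sigma_{j+1}s/d=\Sigma_{j-1}s/d+k_{j+1}s/d+k_{j+2}s/d$ into the identity $[x+y+z]=[x]+[y]+[z]+[\{x\}+\{y\}+\{z\}]$, and using $\{\Sigma_{j-1}s/d\}=\nu_j(s)$, $\{k_{j+1}s/d\}=\mu_{j+1}(s)$, $\{k_{j+2}s/d\}=\mu_{j+2}(s)$, this exponent reduces modulo $2$ to $[\nu_j(s)+\mu_{j+1}(s)+\mu_{j+2}(s)]$, so $\mathrm{sign}\,\b_j(s)=\e_j(s)$. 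Hence, in the non-degenerate situation, all $\b_j(s)$ are positive for all $s$ if and only if (\ref{*}) holds.

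The step I expect to be the main obstacle is disposing of the degenerate specialisations, namely those with $\Sigma_j\equiv0\pmod d$ for some $j$ with $1\le j\le n$, where the sine formula is unavailable. I would deal with them by showing that both sides of the asserted equivalence are then false. On the geometric side: if $\Sigma_j\equiv0\pmod d$ then the $j\times j$ leading block of $h$ over $E$ has determinant $u_j=0$, hence is singular, hence carries a nonzero vector $v\in E^j$ with $h(v,\varepsilon_i)=0$ for $i\le j$ and in particular $h(v,v)=0$; so $h$ represents a zero over $E$, is anisotropic over no embedding of $E$ into $\C$, and $G(\mathcal{O}_F)$ is infinite. On the arithmetic side, for $1\le j\le n-1$: $\Sigma_j\equiv0\pmod d$ gives $\{\sum_{i=1}^{j}k_is/d\}+\{k_{j+1}s/d\}\in\Z$, and since $\{k_{j+1}s/d\}\ne0$ this forces $\nu_j(s)+\mu_{j+1}(s)=1$, so that $\nu_j(s)+\mu_{j+1}(s)+\mu_{j+2}(s)\in(1,2)$ and $\e_j(s)=-1$; hence (\ref{*}) fails. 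The remaining possibility $j=n$ (the vanishing of $\det h$) is excluded by the standing hypothesis that $\mu_\infty=2-\sum_{i=1}^{n+1}\mu_i$ is non-integral, and in any case (\ref{*}) itself forces this: if $\Sigma_n\equiv0\pmod d$ then $\nu_{n-1}(s)+\mu_n(s)+\mu_{n+1}(s)$ is an integer in $\{1,2\}$ for every $s$, and replacing $s$ by $-s$ (the earlier degenerate cases being already treated) converts a value $2$ into a value $1$, so $\e_{n-1}=-1$ for some $s$ and (\ref{*}) fails once more.

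Finally I would assemble: by the first two paragraphs $h$ is totally anisotropic if and only if no $\Sigma_j$ with $1\le j\le n$ is divisible by $d$ and all $\b_j(s)$ are positive, which is equivalent to (\ref{*}); and by the third paragraph, once some $\Sigma_j$ is divisible by $d$, both ``$G(\mathcal{O}_F)$ finite'' and ``(\ref{*}) holds'' fail. Therefore $G(\mathcal{O}_F)$ is finite if and only if (\ref{*}) holds. Apart from the routine trigonometry, the points needing care are the exact form of the anisotropy criterion (``all $u_j(s)\ne0$ \emph{and} all $\b_j(s)>0$'', not merely the latter) and the vanishing of the determinant, handled as above.
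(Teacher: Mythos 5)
Your proof is correct and follows essentially the same route as the paper: reduce the finiteness of $G({\mathcal O}_F)$ to the total anisotropy of $h$ via Lemma~\ref{definite} (irreducibility coming from $\prod x_j\neq 1$, i.e.\ $\mu_\infty\notin\Z$), then identify $\mathrm{sign}\,\b_j(s)$ with $\e_j(s)$ through the sine factorisation and the integral-part identity. Your third paragraph, which disposes of the degenerate specializations with $\Sigma_j\equiv 0\pmod d$ by showing that both sides of the equivalence fail there, treats a case the paper's proof passes over silently (its sign argument tacitly assumes no principal minor vanishes), and is a worthwhile supplement.
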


\begin{proof}  Consider the  ``Gassner  representation'' $G(X):P_{n+1}
\ra U(h, R)$  \cite{V} (recall that the ring  $R$ is the Laurent
polynomial ring in  the variables $X_j:1\leq j \leq  n+1$ with integer
coefficients).      Specializing    $X_j$     to     $x_j=e^{2\pi    i
\frac{k_j}{d}}=e^{2\pi i \mu _j}$ we obtain a representation $\rho _d$
of the pure braid group $P_{n+1}$. The image of $\rho _d$ is contained
in  the  group $G({\mathcal  O}_F)$  (e.g.,  p.  26, paragraph  before
Theorem 16, of \cite{V}).   We have assumed that $\sum _{j=1}^{n+1}\mu
_j ( = 2-\mu _{\infty})$  is not an  integer, so $\prod  x_j \neq
1$. Therefore, by Proposition 19 of \cite{V}, $G({\mathcal O}_F)$ acts
irreducibly.  By  Lemma \ref{definite}, $G({\mathcal  O}_F)$ is finite
if and  only if $h$ is  totally anisotropic.  We must  then prove that
the condition of the anisotropy  of $h$ is equivalent to the condition
(\ref{*}). \\

Let $\det(h_j)$  be the $j \times  j$ principal minor of  the form $h$
obtained  by  specializing  to  the  $t_i$,  for  some  fixed  $s  \in
(\Z/d\Z)^*$.  By Lemma \ref{determinant}, the determinant of $h_j$ is
\[\frac{1-t_1t_2\cdots  t_{j+1}}{(1-t_1)\cdots (1-t_{j+1})}.\]  It
is easily seen, in view of (\ref{sin}), that this determinant is
\[  \det  (h_j)=  \frac{i^j}{2^j}  \frac{\sin  (\frac{\pi  (k_1+\cdots
k_{j+1})s}{d})}{\prod _{i=1}^{j+1} \sin (\frac{\pi k_is}{d})} .  \] As
in the proof of Lemma \ref{determinant}, we have
\[\b  _j=   \frac{\det(h_{j+1})  \det(h_{j-1})}{\det(h_j)^2}=  \big  (
\frac{\sin  (\frac{\pi   (k_1+\cdots  k_{j+2})s}{d})\sin  (\frac  {\pi
(k_1+\cdots             k_j)s}{d})}            {\sin            ^2(\pi
\frac{(k_1+\cdots+k_{j+1})s}{d})}\big      )      \frac{\sin(\frac{\pi
k_{j+1}s}{d})}{  \sin (\frac{\pi  k_{j+2}s}{d})}.\] If  $x$ is  not an
integer  then  the  sign  of  $\sin  (\pi x)$  is  simply  the  number
$(-1)^{[x]}$. Therefore, the sign of $\b _j$ is
\[              (-1)^{[\frac{(k_1+\cdots              +k_{j+2})s}{d}]-
[\frac{(k_1+\cdots+k_j)s}{d}]-
[\frac{k_{j+1}s}{d}]-[\frac{k_{j+2}s}{d}]}.\] Since, for all $x,y,z\in
\R$, we have
\[  [x+y+z]-[x]-[y]-[z]= [\{x\}+\{y\}+\{z\}],\]  it  follows that  the
sign  of  $\b _j$  is  just the  number  $\e_j(s)$.   Hence, by  Lemma
\ref{betadef}, the  anisotropy of $h$  is equivalent to  the condition
that $\e _j(s)=1$, for all $j$. This is exactly condition (\ref{*}).
\end{proof}

\begin{lemma} 
  Condition (\ref{*}) is equivalent to condition (\ref{SS}).
\end{lemma}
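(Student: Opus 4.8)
The plan is to verify, separately for each $s\in(\Z/d\Z)^*$, that the local condition appearing in (\ref{SS}) and the local condition appearing in (\ref{*}) are \emph{both} equivalent to the single statement $[\Sigma_s]\in\{0,n\}$, where $\Sigma_s:=\sum_{i=1}^{n+1}\{k_is/d\}$; since each of (\ref{SS}) and (\ref{*}) is the conjunction over $s$ of its local condition, the equivalence follows at once. I use throughout the standing assumption that $\sum_i\mu_i$ (that is, $2-\mu_\infty$) is not an integer --- already invoked in the proof of Lemma~\ref{finite}, equivalently $\prod_ix_i\neq1$ --- so that $\Sigma_s\notin\Z$ for every $s$; and I record that, since $1\le k_i\le d-1$ and $\gcd(s,d)=1$, each $a_i:=\{k_is/d\}$ lies strictly in $(0,1)$.

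For (\ref{SS}) the reformulation is a one-liner: $\{-k_is/d\}=1-a_i$, so $\sum_i\{-k_is/d\}=(n+1)-\Sigma_s$, and as $0<\Sigma_s<n+1$ with $\Sigma_s\notin\Z$ the disjunction ``$\Sigma_s<1$ or $\sum_i\{-k_is/d\}<1$'' is exactly ``$[\Sigma_s]\in\{0,n\}$''. For (\ref{*}) I set $P_j=a_1+\cdots+a_j$ (so $P_0=0$, $[P_1]=0$, $P_{n+1}=\Sigma_s$) and $\delta_j=[P_{j+1}]-[P_j]$ for $1\le j\le n$. Because $P_j<P_{j+1}<P_j+1$ one has $\delta_j\in\{0,1\}$, and telescoping gives $[\Sigma_s]=\sum_{j=1}^n\delta_j$. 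The key computation is that $\nu_j(s)=\{P_j\}$ (valid since $\sum_{i\le j}k_is/d$ and $P_j$ differ by an integer), whence $\nu_j(s)+\mu_{j+1}(s)+\mu_{j+2}(s)=\{P_j\}+a_{j+1}+a_{j+2}=P_{j+2}-[P_j]$ has integral part $[P_{j+2}]-[P_j]=\delta_j+\delta_{j+1}$; therefore $\e_j(s)=(-1)^{\delta_j+\delta_{j+1}}$, and since $\delta_j,\delta_{j+1}\in\{0,1\}$ this equals $1$ precisely when $\delta_j=\delta_{j+1}$.

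Putting these together, condition (\ref{*}) at $s$ --- namely $\e_j(s)=1$ for all $1\le j\le n-1$ --- is equivalent to $\delta_1=\delta_2=\cdots=\delta_n$, i.e.\ to the sequence $(\delta_j)$ being identically $0$ or identically $1$; by the telescoping identity this is exactly $[\Sigma_s]=\sum_j\delta_j\in\{0,n\}$ (the converse implication being clear, since a $\{0,1\}$-valued sequence of length $n$ with sum $0$, resp.\ $n$, is constant). Comparing with the reformulation of (\ref{SS}) gives the lemma. I do not expect a genuine obstacle; the content of the argument is simply the observation that (\ref{*}) asserts that the ``discrete derivative'' $j\mapsto[P_{j+1}]-[P_j]$ of the (integer parts of the) partial sums is constant, which pins $[\Sigma_s]$ to one of its two extreme values $0$ and $n$. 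The only delicate point is the boundary value $\Sigma_s=n$, excluded here by $\sum_i\mu_i\notin\Z$; if one wished to drop that hypothesis, one notes that when $\sum_i\mu_i\in\Z$ every $\Sigma_s$ is an integer in $\{1,\dots,n\}$ and the averaging identity $\sum_{s}\Sigma_s=\tfrac{n+1}{2}\,\varphi(d)\neq n\,\varphi(d)$ (for $n\ge2$) shows they cannot all equal $n$, so both (\ref{SS}) and (\ref{*}) fail and the stated equivalence still holds.
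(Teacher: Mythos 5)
Your proof is correct, and it rests on the same key computation as the paper's: writing $P_j=\sum_{i\le j}\{k_is/d\}$, one has $\e_j(s)=(-1)^{[P_{j+2}]-[P_j]}$, so condition (\ref{*}) becomes a statement about the integer parts of the partial sums of the fractional parts. Where you differ is in how this is exploited. The paper first uses the case $j=1$ to rule out $\{k_1s/d\}+\{k_2s/d\}=1$, chooses in each class of $(\Z/d\Z)^*/\{\pm1\}$ the representative $s$ with $[\{k_1s/d\}+\{k_2s/d\}]=0$, and then proves $[P_j]=0$ for all $j$ by induction, combining the parity relation $[P_{j+1}]\equiv[P_{j-1}]\bmod 2$ with the a priori bound $[P_{j+1}]\le[P_j]+1$; the converse direction is stated in one line and implicitly requires knowing that $\e_j(-s)=\e_j(s)$ or some equivalent symmetrization. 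You instead show that, for each fixed $s$, both local conditions are equivalent to the single statement $[\Sigma_s]\in\{0,n\}$: for (\ref{*}) because the increments $\delta_j=[P_{j+1}]-[P_j]\in\{0,1\}$ must all coincide, hence be constantly $0$ or constantly $1$, and for (\ref{SS}) by the identity $\sum_i\{-k_is/d\}=(n+1)-\Sigma_s$. This buys a treatment symmetric under $s\mapsto -s$ (no choice of representative, no induction) and makes the ``if'' direction exactly as explicit as the ``only if'' direction, which the paper leaves terse. Your appeal to the standing hypothesis $\sum_i\mu_i\notin\Z$ (equivalently $\Sigma_s\notin\Z$ for every $s$) matches the paper's own use of it, and your closing remark correctly disposes of the degenerate case, though it is not needed under that hypothesis.
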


\begin{proof}  Assume that  condition (\ref{*})  holds.   Applying the
condition with $j = 1$, we  must have $\{ \frac {k_1s}{d} \}+ \{ \frac
{k_2s}{d}\}  \neq   1$,  for  all   $s  \in  (\Z/d\Z)^*$.    For  each
$\overline{s}$ in  the quotient  group $(\Z /d\Z)^*/\{\pm  1\}$, there
are two  representatives $s$  and $d-s$ in  the group  $(\Z/d\Z)^*$ of
units mapping  to $\overline{s}$.  We  consider the numbers $a  = a_s=
[\{ \frac {k_1s}{d}  \}+ \{ \frac {k_2s}{d}\}]$ and  $ b= b_{d-s}= [\{
\frac  {k_1(d-s)}{d}   \}+  \{  \frac   {k_2(d-s)}{d}\}]$.  Since  the
fractional part  of $x$ does not  change if $x$ is  replaced by $x+m$,
for         $m\in         \Z$,         it         follows         that
$b=[\{\frac{-k_1s}{d}\}+\{\frac{-k_2s}{d}                          \}]=
[(1-\{\frac{k_1s}{d}\})+(1-\{\frac{k_2s}{d}\})] = 2-(a+1)= 1-a$.  Thus
one of the  numbers $a,b$ is zero since $0\leq a,b  \leq 1$. We choose
the representative $s$ so that $a = a_s=0$. \\

We now prove  by induction on $j\leq n$ that, for  this choice of $s$,
the     integral    part     of     $\alpha_{j+1}    \stackrel{def}{=}
\{\frac{k_1s}{d}\}+\cdots + \{\frac{k_{j+1}s}{d}\}$  is zero. The case
$j =  1$ was just  treated. Applying this  with $j=n$ will  then prove
that condition (\ref{*}) implies condition (\ref{SS}). \\
  
We now claim that condition (\ref{*}) is equivalent to 
\[[\a _{j+1}]  \equiv [\alpha _{j-1}] \mod  ~2, \] for $2  \leq j \leq
n$.  Indeed, if $m \leq \alpha_{j-1}  < m+1$, for some integer $m \geq
0$, then $\nu_{j-1} = \alpha_{j-1}  - m$.  Thus, $\nu_{j-1} +\mu_{j} +
\mu_{j+1}$ lies in  $(0,1)$ or $(2,3)$ if and  only if $\alpha_{j+1} =
\alpha_{j-1} + \mu_j + \mu_{j+1}$ lies in $(m,m+1)$ or $(m+2,m+3)$, so
that its  integral part  is either the  same as,  or 2 more  than, the
integral part $m$ of $\alpha_{j-1}$. \\

By  induction, we  may  assume that  $[\a  _k]=0$ for  all $k\leq  j$;
therefore, by the above congruence,  we have $[\a _{j+1}]\equiv 0 \mod
~2$. \\

On  the  other hand,  $[\alpha  _{j+1}]=[\alpha  _j]+ [\{\alpha  _j\}+
\{\frac{k_{j+1}s}{d}\}]$.   Since by  induction,  $[\alpha _j]=0$,  it
follows that  $[\alpha _{j+1}]=[\{  \a _j\}+ \{  \frac{k_{j+1}s}{d} \}
]$. Being the integral part of a sum of two numbers in the closed open
interval $[0,1)$,  the latter is at most  one and hence $(0  \leq ) \>
[\a  _{j+1}]\leq 1$.  The conclusion  of the  preceding  paragraph now
implies that  $[\a _{j+1}]=0$,  completing the induction  step.  Hence
condition (\ref{SS}) follows. \\

Conversely,  if  condition  (\ref{SS})  holds, then  all  the  numbers
$[\alpha  _j]$ are  zero,  and  hence the  numbers  $\e_j(s)$ are  all
$1$. This is condition (\ref{*}).
\end{proof}

We can now prove Theorem~\ref{numbertheoreticmonodromy}.

\begin{proof}  Since, by the  assumption on  $\mu _{\infty}$,  we have
$\sum \mu _j\notin \Z$, it follows by Proposition 19 of \cite{V}, that
$\rho _d$ is  irreducible; since (again by \cite{V},  Corollary 3) the
monodromy representation $M_d$ is a  quotient of $\rho _d$, it follows
that $M_d$ is the representation $\rho _d$. \\

The   monodromy   representation (being  the   specialised   Gassner
representation $\rho  _d$) has  image in $U(h)({\mathcal  O}_F)$ where
$h$  is  as  above  (this  is  in subsection  4.1  of  \cite{V}).   By
Lemma~\ref{definite},   the   image  is   finite   if   and  only   if
$U(h)({\mathcal  O}_F)$  is  finite.  So,   by  %and  only  if  $h$  is
%anisotropic at all archimedean  places of $F$, and 
Lemma~\ref{finite},
the image is finite if and  only if condition (\ref{*}) holds.  By the
above lemma, this is equivalent to condition (\ref{SS}).
\end{proof}

\section{Algebraic Lauricella Functions} \label{lauricella}

We list some corollaries to Theorem~\ref{numbertheoreticmonodromy}. We
assume as before, that  $\mu _j=\frac{k_j}{d}$ and $\mu _{\infty}$ are
rational  and  not  integral.    The  corollaries  below follow  from  the
finiteness of monodromy and  the observation that the Lauricella $F_D$-functions 
are  the period integrals associated to  homology classes in
the curve  $C$ whose affine part is  given by $y^d=(x-z_1)^{k_1}\cdots
(x-z_{n+1})^{k_{n+1}}$.  \\

\begin{corollary} The Lauricella $F_D$ -function 
\[F_D(a_1,  \cdots, a_{n+1})=\int  _{a_i}^{a_j} \frac{dx}{y},\]  is an
algebraic function of the variables $a_1, \cdots, a_{n+1}$ if and only
if the condition (\ref{SS}) holds for the numbers $(n,d,k_i)$.
\end{corollary}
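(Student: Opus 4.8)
The plan is to deduce this corollary directly from Theorem~\ref{numbertheoreticmonodromy} by identifying the Lauricella $F_D$-functions with period integrals on the curve $C$ and invoking the standard principle that a multivalued function whose monodromy is finite is algebraic, while one with infinite monodromy is not. First I would recall the classical fact, going back to Picard and Deligne--Mostow \cite{Del-Mos}, that for each choice of base cycle the period integral $\int_{a_i}^{a_j} dx/y$ is a multivalued holomorphic function of the parameters $(z_1,\dots,z_{n+1})$ on the configuration space $S$, and that the sheaf of germs of such periods is a local system whose monodromy representation is (a sub/quotient of) the monodromy on $M_d$ studied above. Concretely, the differential $dx/y$ lies in the eigenspace of $H^1(C,\C)$ on which a generator of $G=\Z/d\Z$ acts by a primitive $d$-th root of unity (after possibly adjusting which eigenvalue, which is exactly the normalization built into the definition of $M_d$), and the paths of integration from $z_i$ to $z_j$ give the homology classes dual to these periods. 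Thus the Lauricella system is, up to the base change $F=\Q(\cos(2\pi/d))$ and passing to the relevant eigencomponent, precisely the local system underlying $\rho_d$.

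Next I would argue the ``if'' direction. Suppose condition (\ref{SS}) holds. By Theorem~\ref{numbertheoreticmonodromy} the monodromy group in $\mathrm{Aut}(M_d)$ is finite, hence the analytic continuation of the period vector along any loop in $S$ takes only finitely many values. A holomorphic (indeed, in this setting, algebraic-in-nature) function on $S$ with finitely many branches and at worst finite-order ramification along the discriminant locus $\{z_i = z_j\}$ and at infinity extends to an algebraic function: one forms the finite cover of $S$ (or rather of the projective closure) trivializing the monodromy, on which the period becomes single-valued and meromorphic, hence rational; descending, $F_D$ satisfies a polynomial equation over $\C(z_1,\dots,z_{n+1})$. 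This is the classical Liouville-type criterion; I would cite it rather than reprove it, perhaps referencing \cite{Coh-Wol}, \cite{Bod}.

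For the ``only if'' direction, I would contrapose: if (\ref{SS}) fails, then by Theorem~\ref{numbertheoreticmonodromy} the monodromy group is infinite. Since $\rho_d = M_d$ is irreducible (Proposition~19 of \cite{V}, using $\mu_\infty \notin \Z$), an infinite monodromy group forces at least one period function to have infinitely many branches; a multivalued function on $S$ with infinitely many branches cannot be algebraic, because an algebraic function of several variables has only finitely many sheets over its domain. Hence $F_D$ is transcendental.

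The main obstacle is the first step: making precise the identification of the Lauricella $F_D$ local system with $\rho_d$, including the matching of the monodromy actions and the bookkeeping of which $G$-eigenspace of $H^1(C)$ contains $dx/y$, and checking that the ramification of $F_D$ at the boundary of $S$ is genuinely of finite order when the monodromy is finite (so that the extension-to-algebraic argument applies). The subtlety is that $F_D$ depends on the choice of endpoints $a_i, a_j$ and thus on a chosen relative homology class, and one must ensure these classes actually span the $\rho_d$-representation and transform under $P_{n+1}$ as claimed; I would handle this by appealing to the Deligne--Mostow description of the periods and to the fact, already used in the proof of Theorem~\ref{numbertheoreticmonodromy}, that $M_d$ is the full representation $\rho_d$. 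The remaining ``finite monodromy $\Rightarrow$ algebraic'' and ``infinite monodromy $\Rightarrow$ transcendental'' implications are standard and I would present them as brief remarks rather than detailed proofs.
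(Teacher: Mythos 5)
Your proposal follows essentially the same route as the paper, which derives this corollary from Theorem~\ref{numbertheoreticmonodromy} together with the single observation that the Lauricella $F_D$-functions are the period integrals attached to homology classes on the curve $C$, the equivalence of finite monodromy with algebraicity being taken as standard. Your version merely spells out the details (the identification of the local system with $\rho_d$, the regular-singularity/finite-branching argument, and the irreducibility input for the converse) that the paper leaves implicit.
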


The following corollary is to be read up to
equivalence of the $\mu _j$ as defined in the Introduction.

\begin{corollary} If  $n+1\geq 6$, then  the function $F_D(a_1,\cdots,
a_{n+1})$ is not algebraic. \\

If $n+1=5$,  then $F_D$ is algebraic if  and only if $d=6$  and all the
$k_i$ are equal to $1$. \\

If $n+1=4$, then  $F_D$ is algebraic if and only if  $d=6$ and all the
$k_i$ are equal to $1$; or else, all but one of the $k_i$ are equal to
$1$ and one of the $k_i=2$. \\

If $n+1=3$, and if $F_D$ is algebraic, then $d=2m$ and $k_1=k_2=p$ and
$k_3=m-p$, or else $d,k_i$ lie in a finite list, with $d\leq 60$.
\end{corollary}

%In  \cite{Coh-Wol}  and  \cite{Bod},  the complete  classification  of
%Lauricella functions which are algebraic  is given. This list can also
%be recovered from the main result of the present paper. 

\end{document}